\newtheorem{tw}{Theorem}  
\newtheorem{lem}[tw]{Lemma}  
\newtheorem{cnj}[tw]{Conjecture}  
\newtheorem{cor}[tw]{Corollary}  
\newtheorem{rem}[tw]{Remark}
\begin{document}
\hyphenation{every}
\title{On decomposing multigraphs into locally irregular submultigraphs}
\author{Igor Grzelec\thanks{AGH University of Science and Technology, al. A. Mickiewicza 30, $30-059$  Krak\'ow, Poland, grzelec@agh.edu.pl}, Mariusz Woźniak\thanks{AGH University of Science and Technology, al. A. Mickiewicza 30, $30-059$  Krak\'ow, Poland, mwozniak@agh.edu.pl}}
\maketitle
\begin{abstract}
A locally irregular multigraph is a multigraph whose adjacent vertices have distinct degrees. The \textit{locally irregular edge coloring} is an edge coloring of a multigraph $G$ such that every color induces a locally irregular submultigraph of $G$. We say that a multigraph $G$ is \textit{locally irregular colorable} if it admits a locally irregular edge coloring and we denote by ${\rm lir}(G)$ the \textit{locally irregular chromatic index} of $G$, which is the smallest number of colors required in a locally irregular edge coloring of a locally irregular colorable multigraph $G$. We conjecture that for every connected graph $G$, which is not isomorphic to $K_2$, multigraph $^2G$ obtained from $G$ by doubling each edge admits ${\rm lir}(^2G)\leq 2$. This concept is closely related
to the well known 1-2-3 Conjecture, Local Irregularity Conjecture, (2, 2) Conjecture
and other similar problems concerning edge colorings. We show this conjecture holds for graph classes like paths, cycles, wheels, complete graphs, complete $k$-partite graphs and bipartite graphs. We also prove the general bound for locally irregular chromatic index for all 2-multigraphs using our result for bipartite graphs. \\

\textit{Keywords:} locally irregular edge coloring; decomposable; bipartite graphs; cactus graphs.
\end{abstract}
\section{Introduction}
All graphs and multigraphs considered in this paper are finite. Let $G=(V, E)$ be a graph. We call a function $f: E \rightarrow \{1, 2, \dots, k\}$ \textit{edge coloring} of $G$. We begin with presenting some methods for distinguishing neighboring vertices in $G$. For every vertex $x$ we put $\sigma (x):=\sum \limits_{x \in e}f(e)$. Two vertices $x$ and $y$ are \textit{distinguished} if $\sigma (x)\neq \sigma (y)$. We can interpret edge coloring of $G$ as creating multigraph from $G$ in which we replace each edge by $f(e)$ parallel edges. Then $\sigma (x)$ is a degree of the vertex $x$ in the multigraph $G'$ created from the graph $G$. If adjacent vertices have different degrees we call a multigraph \textit{locally irregular}.

Note that if all adjacent vertices are distinguished in the edge coloring of $G$, then the function $\sigma (x)$ defines a proper vertex coloring of $G$. Thus, we introduce a parameter $\chi_{\Sigma}(G)$, which is the smallest $k$ such that in edge coloring of $G$ all adjacent vertices are distinguished. We call such coloring \textit{neighbor-sum-distinguishing}. This problem was first introduced by Karoński, Łuczak and Thomason in \cite{Karonski Luczak Thomason}, where they also proposed the following conjecture.  
\begin{cnj}[1-2-3 Conjecture]
For every graph $G$ containing no isolated edges, $\chi_{\Sigma}(G)\leq 3$.
\end{cnj}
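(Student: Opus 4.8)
The statement to be addressed is the celebrated 1-2-3 Conjecture, which stayed open for nearly two decades, so any honest plan must concede that a complete proof is genuinely hard; I will describe the route I would follow and pinpoint where the real difficulty lies rather than pretend the whole thing is routine. The natural first reduction is to a single connected component on at least three vertices, since isolated edges are excluded and components are treated independently. I would then try to certify the bound algebraically on the important subclass of graphs admitting a proper coloring into three color classes. Here the plan is to fix a proper $3$-coloring $c\colon V\to\{1,2,3\}$, orient the edges accordingly, and treat the sought weights $f(e)\in\{1,2,3\}$ as formal variables, so that each $\sigma(x)=\sum_{x\in e}f(e)$ becomes a linear form (note the weight of a shared edge $xy$ cancels in $\sigma(x)-\sigma(y)$). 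Forming $P=\prod_{xy\in E}\bigl(\sigma(x)-\sigma(y)\bigr)$, I would aim to exhibit a monomial of maximal degree $|E|$ with nonzero coefficient and all exponents at most $2$; Alon's Combinatorial Nullstellensatz then supplies an assignment from a set of size $3$ making every factor nonzero, i.e.\ a neighbor-sum-distinguishing coloring. Choosing the target monomial using the acyclic orientation induced by $c$ is what keeps the leading coefficient computable.

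For general, non-$3$-colorable graphs the algebraic method breaks down, and I would switch to a constructive weight-shifting argument. Order the vertices $v_1,\dots,v_n$ and process them one at a time, assigning to each vertex a provisional sum while reserving, for every not-yet-finalized vertex, a small set of still-attainable values. When finalizing $v_i$, I would modify only the weights of edges joining $v_i$ to later vertices, spending the reserved flexibility to push $\sigma(v_i)$ away from the already-fixed sums of its earlier neighbors. Carried out carefully, this greedy scheme preserves the invariant that two candidate values always survive for each vertex, which is exactly what forces distinguishability; the price is that the naive version needs a palette of size roughly $5$ rather than $3$.

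The main obstacle is therefore not the existence of a bounded palette but the sharp constant $3$. Closing the gap from $5$ to $3$ is precisely the content of the recent resolution of the conjecture, and it demands replacing the crude interval-flexibility invariant by a far more delicate potential/amortization analysis that tracks how a local weight change propagates through the graph and controls both the parity and the magnitude of the affected sums. I expect this final step to be where the proof really bites, since it is exactly the point at which every earlier weaker bound stalled; the algebraic and greedy ingredients above are best read as the tools that dispatch the tractable subclasses and frame the remaining difficulty, not as a self-contained argument for the full conjecture.
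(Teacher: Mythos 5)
There is no proof to compare against here: the paper states the 1-2-3 Conjecture as exactly that --- a conjecture attributed to Karo\'nski, \L{}uczak and Thomason --- and explicitly remarks that it ``remains still open,'' citing $\chi_{\Sigma}(G)\leq 5$ (Kalkowski--Karo\'nski--Pfender) as the best general bound it relies on. Your submission, by its own admission, is likewise not a proof. That honesty is to your credit, but the deliverable has a genuine gap, and in fact two of them even within the partial programme you outline.

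First, the algebraic step for $3$-colorable graphs is asserted rather than carried out: the entire difficulty of applying the Combinatorial Nullstellensatz to $P=\prod_{xy\in E}\bigl(\sigma(x)-\sigma(y)\bigr)$ lies in exhibiting a degree-$|E|$ monomial with all exponents at most $2$ \emph{and} a provably nonzero coefficient, and the sentence claiming the acyclic orientation ``keeps the leading coefficient computable'' does no work toward that; note also that the known result for $3$-colorable graphs is obtained by a different (group-theoretic, $\mathbb{Z}_3$-valued) argument, so you would be reproving a hard special case by a route whose key computation is missing. Second, and decisively, your constructive weight-shifting argument concedes it only yields a palette of size about $5$, and your final paragraph states outright that closing the gap to $3$ ``demands replacing the crude interval-flexibility invariant by a far more delicate potential/amortization analysis'' which you do not supply. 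A proof plan that identifies where the difficulty lies, without resolving it, establishes nothing; as it stands the statement remains exactly as open for you as it is for the paper.
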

This conjecture remains still open, but there are some important results about the 1-2-3 Conjecture and we refer the reader to the survey \cite{Seamone}. The best known general result about this conjecture is that every graph containing no isolated edges admits $\chi_{\Sigma}(G)\leq 5$ and was proved by Kalkowski, Karoński and Pfender in \cite{Kalkowski Karonski Pfender}. In the case of regular graphs Przybyło proved in \cite{Przybylo new} that every $d$-regular graph $G$, where $d\geq 2$, admits $\chi_{\Sigma}(G)\leq 4$ and if $d\geq 10^8$ then $G$ admits $\chi_{\Sigma}(G)\leq 3$.

A weaker version of this neighbor distinguishing edge coloring is  \textit{multiset neighbor distinguishing} edge coloring. For each vertex $x$ from $G$ we denote by $M(x):=[f(e): x \in e]$ the multiset of colors of edges incident to the vertex $x$.  In this coloring two adjacent vertices $x$ and $y$ are distinguished if $M(x)\neq M(y)$. We define a parameter $\chi_M(G)$ as the smallest $k$ for which there exists a multiset neighbor distinguishing edge coloring of $G$. We can easily see that every graph $G$ satisfies $\chi_M(G) \leq \chi_{\Sigma}(G)$ because if the sums are different then the multisets are also different. The best known result about the multiset neighbor distinguishing edge coloring is the following theorem which was proved by  Vu\v cković in \cite{Vuckovic}.
\begin{tw}
For every graph $G$ containing no isolated edges, $\chi_M(G)\leq 3$.
\end{tw}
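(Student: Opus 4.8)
The plan is to argue constructively with a greedy sweep over a rooted spanning tree, leaning on one elementary but decisive observation: two adjacent vertices of different degrees are automatically distinguished, because their colour-multisets then have different sizes. So the whole difficulty is confined to edges whose endpoints have equal degree.

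First I would reduce the problem. Each connected component may be coloured independently; a component that is a single edge is forbidden by hypothesis and a single vertex needs nothing, so I may assume $G$ is connected with at least three vertices. By the size remark, it then suffices to ensure that every edge $uv$ with $d(u)=d(v)$ joins vertices whose colour-count vectors $(\#1,\#2,\#3)$ differ; every other adjacency is free of charge. Next I fix a spanning tree $T$ rooted at a vertex $r$, colour all non-tree edges with colour $1$ once and for all, and assign to each non-root vertex $v$ its parent edge $e_v$ as a private \emph{control edge}. The vertices are processed in reverse breadth-first order, children before parents, and when $v$ is reached I choose the colour of $e_v \in \{1,2,3\}$. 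The merit of this order is that at $v$'s turn every edge incident to $v$ except $e_v$ is already fixed, so the choice of $e_v$ finalises the multiset $M(v)$ permanently, while all later steps only recolour control edges of ancestors and hence never disturb it.

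The basic local move is that the three colours available for $e_v$ yield three pairwise distinct multisets for $v$, since they differ in exactly one coordinate of the count vector. Consequently any already-finalised neighbour of $v$ of the \emph{same} degree can agree with at most one of these three multisets, and therefore forbids at most one colour. This immediately disposes of every vertex that has at most two finalised equal-degree neighbours at its processing time, since a third colour always survives.

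The main obstacle is exactly the vertices this counting does not reach: a vertex carrying three or more finalised same-degree neighbours could in principle have all three colours blocked, and the root $r$ owns no control edge at all, so its multiset is dictated entirely by its children. I expect the heart of the proof — and the reason the sharp bound is $3$ and not $2$ — to live here. The resolution I would pursue is threefold: grant each vertex a second degree of freedom by also permitting it to re-tune the edge to one leaf child, which is harmless downstream; reserve a single edge incident to $r$ (re-routing the control of the affected child, chosen to be low-degree or otherwise already distinguished) so that the root's multiset can be fixed by a dedicated final adjustment; and isolate the genuinely rigid configurations — essentially the regular and near-regular blocks, where degree gives no assistance whatsoever — as explicit base cases handled by hand. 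Checking that these extra moves always leave at least one admissible colour, through a short case analysis on how many of the three candidate multisets are simultaneously blocked, is the delicate bookkeeping I would carry out last.
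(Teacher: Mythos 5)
This statement is not proved in the paper at all: it is Vu\v ckovi\'c's theorem, quoted as a known result with a citation, so there is no ``paper proof'' to compare against. Judged on its own merits, your proposal is an honest plan of attack but it contains a genuine gap, and the gap sits exactly where the theorem is hard. Your counting argument (each already-finalised same-degree neighbour forbids at most one of the three choices for the control edge $e_v$) only settles vertices with at most two such neighbours. In a $d$-regular graph with $d\geq 4$ a vertex $v$ can, at its processing time, have up to $d-1$ already-finalised neighbours, all of the same degree: all of its children in the spanning tree plus every non-tree neighbour processed earlier, the latter all carrying colour $1$ on the connecting edge by your convention. Three of them already suffice to block all three candidate multisets for $v$, and none of your three repair moves closes this. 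A vertex of a regular graph has no leaf children to re-tune; the reserved edge at the root rescues only the root; and ``regular and near-regular blocks handled by hand as base cases'' is not a base case --- it is the entire content of the theorem, since regular graphs are precisely where the degree observation gives nothing and where even the weaker bound $\chi_M(G)\leq 4$ of Addario-Berry, Aldred, Dalal and Reed required a global argument rather than a local greedy one.

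There is also a smaller structural worry: colouring every non-tree edge with colour $1$ throws away most of the freedom in dense graphs, which is why the local budget runs out. Vu\v ckovi\'c's actual proof does not proceed by a single rooted sweep; it combines a careful choice of an initial partial colouring with an iterative recolouring argument in which many edges, not just one private control edge per vertex, remain adjustable. If you want to salvage your outline, the minimum you need is a mechanism by which a blocked vertex can shift the conflict to a neighbour together with a potential function showing the shifting terminates --- and you should expect that mechanism, not the tree sweep, to be the real proof.
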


Every locally irregular graph $G$ admits $\chi_{\Sigma}(G)=\chi_M(G)=1$. This observation motivated a different approach to the problem of local  \hyphenation{ir-re-gu-la-ri-ty}irregularity of graphs. We denote by ${\rm lir}(G)$ the smallest number $k$ such that there exists a decomposition of graph $G$ into $k$ locally irregular graphs. We can easily see that not every graph has such decomposition. We define the family $\mathfrak{T}$ recursively as follows:
\begin{itemize}
  \item the triangle $K_3$ belongs to $\mathfrak{T}$,
  \item if $G$ is a graph from $\mathfrak{T}$, then any graph $G'$ obtained from $G$ by identifying a vertex $v\in V(G)$ of degree 2, which belongs to a triangle in $G$, with an end vertex of a path of even length or with an end vertex of a path of odd length such that the other end vertex of that path is identified with a vertex of a new triangle.
\end{itemize}
The family $\mathfrak{T'}$ consists of the family $\mathfrak{T}$, all odd length paths and all odd length cycles. In \cite{Baudon Bensmail Przybylo Wozniak} Baudon, Bensmail, Przybyło and Woźniak proved that only the graphs from the family $\mathfrak{T'}$ do not have decomposition into locally irregular graphs.

If the graph $G$ satisfies ${\rm lir}(G)\leq k$, then $\chi_M(G)\leq k$. This is true because in a decomposition of the graph $G$ into $k$ locally irregular graphs, every two neighboring vertices in $G$ have different degrees in at least one locally irregular graph. Therefore every two neighboring vertices in $G$ have multisets differing in the multiplicity of at least one element. Inspired by this fact Baudon, Bensmail, Przybyło and Woźniak in \cite{Baudon Bensmail Przybylo Wozniak} proposed the conjecture that every connected graph $G \notin \mathfrak{T'}$ satisfies ${\rm lir}(G)\leq 3$. However in 2021 Sedlar and \v Skrekovski in \cite{Sedlar Skrekovski} proved that the bow-tie graph $B$ presented in Figure \ref{bow-tie graph} is not decomposable into three locally irregular graphs. They also proposed the following new conjecture and asked if there are any other graphs which are not decomposable into three locally irregular graphs.
\begin{figure}[h!]
\centering
\includegraphics[width=4.5cm]{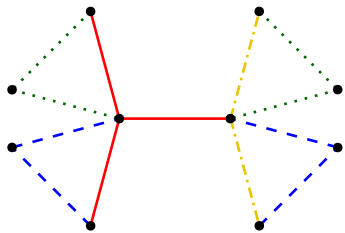}
\caption{The bow-tie graph $B$ and its decomposition into four locally irregular graphs.}
\label{bow-tie graph}
\end{figure}
\begin{cnj}[\cite{Sedlar Skrekovski}]
\label{graph4}
Every connected graph $G \notin \mathfrak{T'}$ satisfies ${\rm lir}(G)\leq 4$.
\end{cnj}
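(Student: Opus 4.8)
The plan is to reduce the conjecture to its genuinely hard instances, the dense and (near-)regular graphs, and then to treat those with a dedicated argument. First I would set aside every connected $G \notin \mathfrak{T'}$ already known to decompose into at most four locally irregular graphs: trees, complete and complete multipartite graphs, cycles of admissible parity, and more generally any graph possessing a low-${\rm lir}$ decomposition. Since the locally irregular property is preserved under disjoint unions, a global decomposition can be assembled from decompositions of well-chosen subgraphs, so I may concentrate on graphs of large minimum degree whose degree sequence is close to constant—this is where the residual difficulty lies.

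The core reduction I would pursue is to partition $E(G)$ into a bounded number of blocks, each of which is either already locally irregular or $d$-regular for some $d$. Such a splitting arises from repeated degree halving: an Eulerian orientation followed by separating forward from backward edges divides a graph into two subgraphs whose degrees are each roughly halved, and iterating a logarithmic number of times leaves only blocks on which all degrees agree up to $\pm 1$. The few non-regular residues can then be absorbed into one or two locally irregular colors by greedy recoloring along augmenting paths, so that ${\rm lir}(G)$ is bounded by a constant multiple of the largest ${\rm lir}$ of a regular block plus an additive term. This reduces the full conjecture to the regular case.

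For a connected $d$-regular graph the strategy is to break the regularity deliberately, splitting by the parity of $d$. When $d$ is even, Petersen's theorem supplies a $2$-factorization whose cyclic structure can be exploited to carve out locally irregular pieces while keeping the number of colors bounded; the bipartite-regular subcase is most tractable, since orienting across the bipartition lets one enforce strictly monotone degree patterns along each side. The odd-regular case is the delicate one: extracting a perfect matching (when it exists) shifts many degrees by one simultaneously and can generate large clusters of coincident degrees, so the resulting conflicts must be resolved globally rather than block by block.

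The main obstacle—and the reason the exact value $4$ is so much harder than the mere existence of a universal constant—is precisely this regular case, above all its odd- and low-degree variants. There the set of vertices forced to share a degree with a neighbor is large, while each locally irregular color repairs only a limited family of such coincidences, so a naive union of the reductions above spends far more than four colors. Closing the gap to $4$ would require showing that, after the residues are absorbed, all regular blocks together need only one or two further colors; I expect this to demand an alternating-path or discharging scheme tailored to the $\pm 1$ degree windows produced by the reduction, in place of the block-by-block counting that yields only a large constant. In my view essentially all the difficulty of the conjecture is concentrated in this final, tight treatment of regular graphs.
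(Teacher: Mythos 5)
This statement is a \emph{conjecture}, not a theorem: the paper does not prove it, but merely cites it from Sedlar and \v{S}krekovski and records that the best known general bounds on ${\rm lir}(G)$ for connected $G \notin \mathfrak{T'}$ are $328$ (Bensmail, Merker, Thomassen) and later $220$ (Lu\v{z}ar, Przyby\l{}o, Sot\'ak) --- two orders of magnitude away from $4$. Your proposal does not close that gap, and you essentially concede as much in your final paragraph. Concretely: the degree-halving scheme via Eulerian orientations that you describe is the Bensmail--Merker--Thomassen machinery, and it is exactly what produces the constant $328$; iterating it a logarithmic number of times and then multiplying by the cost of each regular block is structurally incapable of yielding a single-digit bound without a fundamentally new ingredient. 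The step where you ``absorb the non-regular residues into one or two locally irregular colors by greedy recoloring along augmenting paths'' is asserted, not argued, and it is precisely where any such proof must do its real work. Likewise, the regular case is not a tractable endpoint of the reduction: the best known result there (${\rm lir}(G)\leq 3$ for $r$-regular $G$) requires $r \geq 10^7$, and the low-degree and odd-degree regular graphs you flag as ``delicate'' are exactly the instances nobody can handle.

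So the honest assessment is that your text is a research programme, not a proof, and it cannot be compared against ``the paper's own proof'' because none exists. If your goal is to contribute to this circle of problems within the scope of the present paper, the tractable analogue is Conjecture \ref{main} on $2$-multigraphs, where the doubling of edges gives enough parity slack that the authors do prove the bound $2$ for bipartite graphs and several other classes, and a general bound of $76$; the techniques there (path-systems with prescribed odd/even endpoints, the twin-set lemma) are worth studying as a model for what a complete argument in this area looks like.
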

Perhaps the following version of the Local Irregularity Conjecture is true.

\begin{cnj}[\cite{Sedlar Skrekovski 2}]
\label{graph3}
Every connected graph $G \notin \mathfrak{T'}$ except for the bow-tie graph $B$  satisfies ${\rm lir}(G)\leq 3$.
\end{cnj}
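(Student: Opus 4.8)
The plan is to attack the statement by structural induction, since the bound ${\rm lir}(G)\le 3$ must hold for \emph{all} admissible $G$ and no single explicit construction can plausibly cover every connected graph at once. First I would reduce to the $2$-edge-connected case: if $G$ has a bridge or a cut vertex, split $G$ at that vertex into smaller blocks $G_1,\dots,G_t$, colour each admissible block with three colours by the induction hypothesis, and then recombine the colourings. The delicate point of the recombination is the glue vertex $v$, whose degree in a given colour class is the sum of its colour-degrees over all incident blocks; so I would need a normalisation lemma guaranteeing that each block admits a $3$-colouring in which the colour-degrees at $v$ can be prescribed, or at least shifted by a controlled permutation of colours, without destroying local irregularity elsewhere. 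Blocks that are themselves members of $\mathfrak{T'}$ (odd paths, odd cycles, members of the triangle family) or isolated edges $K_2$ could not be coloured on their own and would have to be absorbed into a neighbouring block.

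For the $2$-edge-connected case I would try to peel off a single locally irregular subgraph $H$ so that $G-E(H)$ is again admissible of strictly smaller complexity, and then colour $G-E(H)$ with at most two further colours. A natural source for such an $H$ is a degree-controlling subgraph obtained from a near-perfect matching, an ear decomposition, or an Eulerian/orientation argument, chosen so that its removal lowers the maximum degree or pushes $G$ toward the near-regular regime. I would then split into two regimes: the high-degree regime, where a connected regular (or near-regular) graph of large degree is known to decompose into locally irregular pieces, and the low-degree regime, where an explicit finite analysis governs the few remaining shapes — in particular the triangle family $\mathfrak{T}$ that produces the bow-tie.

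The hard part, and the reason this remains only a conjecture, is twofold. The constraint is \emph{not local}: the degree of a vertex inside one colour class depends on every edge of that colour at it, so recolouring a single edge can create a new degree collision at a neighbour, and such repairs propagate — exactly the behaviour that makes a clean induction resist closing. Worse, the bound is \emph{tight}, since the bow-tie $B$ genuinely requires four colours, so the argument can carry no uniform slack and must detect and excise precisely the one exceptional configuration. In practice this forces an unusually careful treatment of the base cases built from triangles, where the inductive step that glues a triangle onto the rest of the graph is most likely to fail; isolating $B$ as the unique obstruction is where I expect the real difficulty to lie.
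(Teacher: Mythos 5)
You should first note that the statement you were asked to prove is not a theorem of this paper at all: it is an open conjecture, attributed to Sedlar and \v Skrekovski, which the paper only states and surveys. The paper itself records that it is known only for special classes (trees, cacti, graphs of minimum degree at least $10^{10}$, $r$-regular graphs with $r\geq 10^7$) and that the best general bound is $220$, not $3$. So there is no proof in the paper to compare yours against, and a correct submission here would have been to identify the statement as open rather than to offer an argument for it. To your credit, your own text concedes this (``the reason this remains only a conjecture''), but that concession means what you wrote is a research programme, not a proof, and it cannot be accepted as one.

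Concretely, every load-bearing step of your outline is asserted rather than established. The ``normalisation lemma'' letting you prescribe or permute colour-degrees at a cut vertex is exactly the known obstruction to a block-by-block induction: blocks lying in $\mathfrak{T'}$ admit no locally irregular decomposition at all, and ``absorbing'' them into a neighbouring block changes degrees along an entire path, which can propagate conflicts arbitrarily far --- you name this phenomenon but do not overcome it. Likewise, in the $2$-edge-connected case you posit a locally irregular subgraph $H$ whose removal leaves an admissible graph of smaller complexity and which can be completed with two more colours; no construction of such an $H$ is given, and producing one is essentially the whole problem (the existing $328$- and $220$-colour proofs spend all their effort on a much weaker version of this step). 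Finally, the tightness at the bow-tie $B$ means any inductive argument must rule out exactly one exceptional configuration with zero slack, which you correctly flag as the hard part but leave untouched. The gap is therefore not a repairable detail: the proposal contains no proof of any of its three key claims.
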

Let us mention some results connected to Conjecture \ref{graph3}. This conjecture was proved for some graph classes among others trees \cite{Baudon Bensmail}, graphs with the minimum degree at least $10^{10}$ \cite{Przybylo}, $r$-regular graphs where $r\geq 10^7$ \cite{Baudon Bensmail Przybylo Wozniak} and cacti \cite{Sedlar Skrekovski 2}. For general connected graphs first Bensmail, Merker and Thomassen \cite{Bensmail Merker Thomassen} proved that 328 is the upper bound for ${\rm lir}(G)$ if $G\notin \mathfrak{T'}$. Later, the bound was lowered to the value of 220 by Lu\v zar, Przybyło and Soták  \cite{Luzar Przybylo Sotak}.

Another approach to the local irregularity of graph combine neighbour-sum-distinguishing edge coloring and graph decomposition into locally irregular graphs. Let $p$, $q$ be two positive integers. By $(p, q)$\textit{-coloring} of a graph $G$ we mean a decomposition of $G$ into at most $p$ subgraphs such that in each of these subgraphs the neighbouring vertices can be distinguished (by sums) using at most $q$ colors. We can easily see that the 1-2-3 Conjecture is equivalent to the statement that every graph containing no isolated edges admits (1, 3)-coloring. This notion was first introduced in \cite{Baudon Bensmail Davot Hocquard}, where Baudon et. al. proposed the following conjecture.

\begin{cnj}[(2, 2) Conjecture]
\label{2,2}
Every connected graph of order $n\geq 4$ has a $(2,2)$-coloring.
\end{cnj}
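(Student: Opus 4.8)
The plan is to reformulate the statement in the language used throughout this paper and then attack it by structural induction. First I would observe that a $(2,2)$-coloring of $G$ is exactly a partition of $E(G)$ into two parts $E_1,E_2$ together with a weight function $w:E(G)\to\{1,2\}$ such that, for each $i\in\{1,2\}$, the weighted degrees $\sigma$ restricted to $E_i$ distinguish every pair of adjacent vertices joined by an edge of $E_i$; equivalently, each part becomes a locally irregular multigraph once every edge $e\in E_i$ is replaced by $w(e)$ parallel copies. This is precisely the kind of object this paper manipulates, so the same tools should apply. I would then reduce to $2$-connected graphs: a $(2,2)$-coloring can be assembled block by block along the block-cut tree, provided one keeps, at every cut vertex, a little freedom in its two part-sums so that the colorings of incident blocks can be reconciled; the hypothesis $n\geq 4$ is what rules out the tiny obstructions where no such freedom exists.

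For a $2$-connected graph $G$ of order $n\geq 4$ I would use an ear decomposition, starting from an induced cycle and adding ears one at a time. The base cycle is handled directly, distinguishing the vertices around it using the two parts and the two weights; here even and odd cycles must be treated separately, and the availability of a \emph{second} part is exactly what supplies the extra flexibility that a single part (as in a locally irregular decomposition) lacks on odd cycles. When an ear $P$ is attached at two vertices $u,v$, I would color and weight the interior of $P$ so that its internal vertices are distinguished within whichever part each of their edges lands in, while changing the part-sums of $u$ and $v$ only within a prescribed admissible set. The invariant I would carry is that every vertex retains at least two admissible configurations in each part it meets, so that a conflict created at $u$ or $v$ can be repaired by propagating a correction, an alternating weight/part swap, along a path back into the already-colored portion of the graph.

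The main obstacle, as with all $(p,q)$-colorings at the extreme $q=2$, is the scarcity of admissible configurations at low-degree vertices: a vertex of degree $2$ or $3$ admits very few $(\text{part},\text{weight})$ patterns, so a single forced choice can cascade, and unlike the $1$-$2$-$3$ setting there is no third color to absorb the slack. I expect the genuinely hard cases to be near-regular graphs and graphs densely packed with short odd cycles, where degrees carry no information and the entire burden falls on the two weights split across two parts; these are likely to require either a careful discharging argument or an algebraic existence proof via the Combinatorial Nullstellensatz applied to the two parts simultaneously. It is precisely the failure of the usual slack arguments under the tight budget of two colors and two parts that keeps this conjecture open in general, so I would first aim for a complete proof on structured classes (paths, cycles, and graphs built from them, in the spirit of the classes treated in this paper) and treat the unrestricted case as the principal remaining difficulty.
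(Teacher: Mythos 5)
The statement you are trying to prove is not a theorem of this paper: it is the $(2,2)$ Conjecture of Baudon et al., which the authors merely quote (and later reformulate in multigraph language as Conjecture \ref{2,2n}) without proving it. It is open, so there is no proof in the paper to compare against, and your proposal does not close the gap either. What you have written is a program, not a proof. The load-bearing step is the invariant you announce for the ear-decomposition induction --- that ``every vertex retains at least two admissible configurations in each part it meets'' --- and this is exactly what is never established. A vertex of degree $2$ lying on an ear has only one edge in whichever part its second edge avoids, so its part-sum in that part is a single forced value in $\{1,2\}$; there is no room for the ``alternating weight/part swap'' you propose to propagate, and the correction path can terminate at another low-degree vertex with no admissible move left. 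You acknowledge this yourself in the final paragraph, where the near-regular graphs and graphs dense in short odd cycles are deferred to ``a careful discharging argument or the Combinatorial Nullstellensatz'' without any indication of how either would be set up. Deferring the hard cases is an honest assessment of why the conjecture is open, but it means the argument as written proves nothing beyond (at best) some structured classes.

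A secondary issue: your reduction to $2$-connected graphs via the block-cut tree also needs justification. Blocks that are single edges ($K_2$) admit no $(1,2)$-distinguishing of their own, so the scheme must let such blocks borrow slack from the cut vertices they share with larger blocks, and the bookkeeping of which part-sums at a cut vertex are still adjustable after several incident blocks have been colored is precisely the kind of global constraint that defeats purely local repair arguments. If you want a tractable target in the spirit of this paper, note that the authors' Conjecture \ref{main} is a genuinely different (and, per their remark, independent) relaxation: multiedges may receive both colors, and every edge is doubled. Their Theorem \ref{bipartite graph} shows how far the path-system technique carries in that setting; adapting it to the $(2,2)$ setting, where each edge must lie wholly in one part, is exactly where the two problems diverge.
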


The above mentioned conjecture can be formulated in the language of multigraphs, but first we introduce some notation and terminology. Let $G$ be a graph. We denote by $\mathcal{M}(G)$ the family of all multigraphs created from $G$ by \textit{edge multiplication} i.e. an operation of replacing an edge $e=xy$ which is a set $\{x, y\}$ by a finite multiset $[\{x, y\}, \dots, \{x, y\}]$. Note that we do not need multiply all edges in $G$. We will denote by $\hat G$ a multigraph from the family $\mathcal{M}(G)$. Therefore we can treat a multigraph $\hat G \in \mathcal{M}(G)$ as a graph $G$ with additional function $\mu: E \rightarrow \{1, 2, \dots\}$ where $\mu(e)$ is the edge multiplicity. We shall also use the notation $\mu(e)=0$ to express the fact that $e \notin \hat G$.  We will denote by $\mathcal{M}^{[k]}(G)$ the family of all multigraphs created from $G$ by edge multiplication if multigraphs have edges with multiplicity at most $k$. By 2-$multigraph$ we mean a multigraph in which all edges have multiplicity equal to two and we denote it by $^2G$. Multigraph $\hat H$ is a \textit{submultigraph} of $\hat G$ if $H$ is a subgraph of $G$ and for each edge $e$ of $H$ holds $\mu_{\hat H}(e) \leq \mu_{\hat G}(e)$.  Analogically, multigraph $\hat H$ is an \textit{induced submultigraph} of $\hat G$ if $H$ is an induced subgraph of $G$ and for each edge $e$ of $H$ holds $\mu_{\hat H}(e)= \mu_{\hat G}(e)$. We denote by $\hat d(v)$ degree of the vertex $v$ in a multigraph (the number of single edges incident to the vertex $v$). We say that multigraphs $\hat G_1$ and $\hat G_2$ create the \textit{decomposition} of a multigraph $\hat G$ if for each edge $e$ from $G$ holds $\mu_{\hat G_1}(e)+ \mu_{\hat G_2}(e)= \mu_{\hat G}(e)$.
\begin{rem}
When we consider decomposition of a multigraph we often use the language of edge  coloring. When we decompose a multigraph into two multigraphs we use red-blue coloring i.e. we color the edges of the first multigraph red and the second blue. We denote by $\hat d_r(v)$ and by $\hat d_b(v)$ degree of the vertex $v$ in red and blue multigraph, representatively.
\end{rem}
Now we are ready to formulate the 1-2-3 Conjecture and the (2, 2) Conjecture in the language of multigraphs.

\begin{cnj}[1-2-3 Conjecture]
For every graph $G$ containing no isolated edges there exists a locally irregular multigraph $\hat G\in \mathcal{M}^{[3]}(G)$.
\end{cnj}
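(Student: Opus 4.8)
The statement is precisely the classical 1-2-3 Conjecture rewritten in the language of multigraphs: identifying the multiplicity $\mu(e)\in\{1,2,3\}$ of each edge with the color $f(e)$ of an edge coloring, the multigraph degree $\hat d(x)=\sum_{x\in e}\mu(e)$ coincides with $\sigma(x)$, so a locally irregular multigraph $\hat G\in\mathcal{M}^{[3]}(G)$ in which every edge survives (all $\mu(e)\in\{1,2,3\}$) is exactly a neighbor-sum-distinguishing edge coloring with colors $1,2,3$. Thus the plan reduces to the substantive task: for every graph $G$ without isolated edges, construct a weighting $\mu:E\rightarrow\{1,2,3\}$ with $\sigma(x)\neq\sigma(y)$ for every edge $xy$, i.e.\ to prove $\chi_{\Sigma}(G)\leq 3$.

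First I would attack the bound by a vertex-by-vertex greedy construction in the spirit of Kalkowski, Karoński and Pfender. Fix an ordering $v_1,\dots,v_n$ of the vertices and process them one at a time, freezing at each step the weights of all edges joining the current vertex to earlier ones. When reaching $v_i$, the still-free weights are those of edges to later vertices, and I would use them to steer $\sigma(v_i)$ into a prescribed target pool while keeping all previously frozen vertices mutually distinguished. The crucial device is to allot each vertex not a single admissible sum but a two-element pool (for instance two sums of opposite parity), so that one remaining free edge at $v_i$ suffices to dodge the at most $\deg(v_i)$ values already forced by finalized neighbors; carrying this invariant to the last vertex would yield a valid weighting, and hence the desired $\hat G\in\mathcal{M}^{[3]}(G)$.

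The hard part — indeed essentially the entire content of the statement — is that with only the three weights $1,2,3$ the local freedom is too small for this naive argument to close. The same scheme comfortably delivers membership in $\mathcal{M}^{[5]}(G)$ (the Kalkowski–Karoński–Pfender bound quoted in the introduction) and, after Przybyło's refinement, in $\mathcal{M}^{[4]}(G)$ for regular graphs, but the last units of slack cannot be removed by any presently known ordering-plus-adjustment scheme. To push down to three I would not rely on a single technique but combine several: a Combinatorial-Nullstellensatz polynomial argument for sparse or bipartite cores, where the edge–vertex incidence supplies enough algebraic degrees of freedom; a probabilistic or absorbing argument for dense and near-regular graphs; and structural reductions that peel off low-degree vertices and pendant structures until the remaining core falls into one of the previous two regimes. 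I expect the genuine obstruction to sit exactly at the interface of these regimes — graphs that are neither sparse enough for the algebraic method nor regular enough for the probabilistic one — and it is precisely this gap that keeps the statement a conjecture rather than a theorem.
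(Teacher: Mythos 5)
You were asked to prove a statement that the paper itself never proves: this is one of the paper's \emph{conjectures} (the 1-2-3 Conjecture restated in multigraph language), and the paper offers no argument for it beyond the reformulation itself and citations to partial results ($\chi_{\Sigma}(G)\leq 5$ by Kalkowski, Karoński and Pfender, and Przybyło's bounds for regular graphs). Your opening paragraph handles the only verifiable content correctly: identifying the multiplicity $\mu(e)\in\{1,2,3\}$ with an edge weight $f(e)$ so that every edge of $G$ survives, the multigraph degree $\hat d(x)=\sum_{x\in e}\mu(e)$ coincides with $\sigma(x)$, and local irregularity of $\hat G\in\mathcal{M}^{[3]}(G)$ is exactly a neighbor-sum-distinguishing edge weighting of $G$ with weights $1,2,3$. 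That translation is faithful to what the paper intends by this formulation.

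Beyond that, however, your text is a research program, not a proof, and the gap is total: no step of the substantive claim $\chi_{\Sigma}(G)\leq 3$ is established. The greedy two-pool scheme you sketch is precisely the Kalkowski--Karoński--Pfender argument, and it provably stalls before reaching three weights: with a two-element sum pool per vertex, a single remaining free edge must dodge up to $\deg(v_i)$ forced values, and the adjustment range available from weights restricted to $\{1,2,3\}$ is too small; nothing in your sketch repairs this, and you say so yourself. The proposed patchwork of a Combinatorial Nullstellensatz argument for sparse cores, a probabilistic absorption argument for dense near-regular graphs, and structural peeling in between is offered without any lemma, invariant, or interface condition showing the regimes jointly cover all graphs --- and you explicitly concede that the obstruction at the interface ``keeps the statement a conjecture rather than a theorem.'' Relative to this paper, then, the correct assessment is that neither you nor the authors prove the statement; it is an assumption motivating their Conjecture \ref{main}, not a result. (For the record, a proof of the 1-2-3 Conjecture has since been published by Keusch, J.~Combin.\ Theory Ser.\ B \textbf{166} (2024), 183--202, by a global argument quite unlike your outline; but that proof appears neither in the paper under review nor in your proposal.)
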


\begin{cnj}[(2, 2) Conjecture]
\label{2,2n}
Every connected graph $G$ of order \linebreak $n\geq4$ can be decomposed into two subgraphs $G_r$ and $G_b$ such that there exist locally irregular multigraphs $\hat G_r\in \mathcal{M}^{[2]}(G_r)$ and $\hat G_b\in \mathcal{M}^{[2]}(G_b)$. 
\end{cnj}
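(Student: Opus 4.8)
The statement is the $(2,2)$ Conjecture (Conjecture~\ref{2,2}) transcribed into multigraph language, so proving it amounts to producing, for every connected $G$ with $n\geq 4$, a partition $E(G)=E(G_r)\sqcup E(G_b)$ together with multiplicity functions $\mu_r\colon E(G_r)\to\{1,2\}$ and $\mu_b\colon E(G_b)\to\{1,2\}$ such that each weighted subgraph is locally irregular; equivalently, a labelling of every edge by one of $r_1,r_2,b_1,b_2$ (its colour together with its multiplicity) so that for each red edge $xy$ one has $\hat d_r(x)\neq\hat d_r(y)$ and for each blue edge $xy$ one has $\hat d_b(x)\neq\hat d_b(y)$. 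The plan is to proceed by induction on $|V(G)|$, combining a deletion/extension scheme for sparse local structure with a Kalkowski-type greedy engine for the dense core, exploiting that the two colours supply two essentially independent degree-coordinates at each vertex.

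For the main engine I would fix an acyclic orientation of $G$ coming from a DFS (or degeneracy) vertex ordering $v_1,\dots,v_n$ and process the vertices in this order. When $v_i$ is reached its edges to earlier vertices are already labelled, while its edges to later vertices are still free; I would use the free edges --- choosing for each both a colour and a multiplicity, i.e.\ one of the four contributions $r_1,r_2,b_1,b_2$ --- to steer the pair $(\hat d_r(v_i),\hat d_b(v_i))$ away from the finitely many values forbidden by the already-processed neighbours in the relevant coordinate. The key point is that a single free edge offers four distinct incremental effects distributed over the two coordinates, which is strictly more room to maneuver than the one-coordinate weightings underlying the $1$-$2$-$3$ results such as $\chi_{\Sigma}(G)\le 5$; this surplus is what should let two colours suffice here.

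Before invoking the engine I would strip away the sparse parts: a leaf, a pendant path, or a vertex of degree $2$ can be removed, the reduced graph coloured by induction, and the configuration re-attached, since at an attachment vertex one still controls at least one free edge in each needed coordinate. The finitely many connected graphs of order $4$ and $5$ would be checked directly (they anchor the induction and account for the small excluded cases), and the genuinely exceptional colour-class components --- an isolated edge $K_2$ or a short cycle admitting no neighbour-sum-distinguishing $\{1,2\}$-weighting --- would be prevented from arising by rerouting one incident edge into the other colour, which is always possible when $G$ is connected and $n\geq 4$.

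The hard part will be the rigidity forced by multiplicities confined to $\{1,2\}$: a vertex of degree $d$ split as $d=d_r+d_b$ can only realise weighted-degree pairs in $[d_r,2d_r]\times[d_b,2d_b]$, so adjacent vertices of equal degree --- exactly the situation that makes regular and near-regular graphs the stubborn cases for the Local Irregularity and $(2,2)$ Conjectures --- are difficult to separate in the correct coordinate. Overcoming this will require a careful global choice of the red/blue partition (for instance arranging one colour class to be a spanning separating forest) together with parity bookkeeping to guarantee that the greedy engine never paints itself into a corner; this is precisely the step where the difficulty of the problem is concentrated, and it is where a complete proof would stand or fall.
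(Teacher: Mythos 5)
The statement you were handed is not a theorem of the paper: Conjecture~\ref{2,2n} is the paper's multigraph reformulation of the $(2,2)$ Conjecture of Baudon et al.\ (Conjecture~\ref{2,2}), stated without proof because it is an open problem. There is therefore no paper proof to compare against, and your text --- which you yourself end by conceding that the decisive step ``is where a complete proof would stand or fall'' --- is a strategy sketch, not a proof. Judged as such, it contains genuine gaps rather than merely unfinished routine work.

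Concretely: (a) your greedy engine is circular. If, when processing $v_i$, you fix colour--multiplicity labels on its forward edges to steer the pair $(\hat d_r(v_i),\hat d_b(v_i))$, those edges are no longer free when their later endpoints are processed; if instead you leave them free, then $v_i$'s final degree pair is undetermined, and the ``finitely many forbidden values'' coming from $v_i$'s earlier neighbours are not stable either, since those neighbours' own forward edges are still unlabelled. Kalkowski-type arguments survive exactly this tension only via a carefully maintained invariant (each processed vertex retains a small set of admissible final values, with controlled later perturbations), and you specify no such invariant; note also that local irregularity is a per-edge, per-colour condition, and the last vertex in your ordering has no free edges at all. (b) Your treatment of exceptional components is a handwave: a colour class containing a $K_2$ component can never be locally irregular for any multiplicities in $\{1,2\}$, and $C_3$ likewise admits no neighbour-sum-distinguishing $\{1,2\}$-weighting (the paper's own remark observes that a $(2,2)$-decomposition of $C_3$ does not exist); ``rerouting one incident edge'' changes degrees at the other endpoint and can cascade, and you give no termination or non-interference argument. (c) The rigidity at adjacent vertices of equal degree, which you correctly identify as the crux for regular and near-regular graphs, is precisely the open core of the conjecture; naming it does not resolve it. It is worth noting that the paper deliberately works around this difficulty by studying the different Conjecture~\ref{main} on $^2G$, where the two copies of a multiedge may receive \emph{different} colours (red-blue multiedges) --- exactly the slack that the $(2,2)$ framework, and hence your scheme, does not have.
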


Before we present our conjecture we give a few definitions. The \textit{locally irregular edge coloring} is an edge coloring of a multigraph $M$ such that every color induces a locally irregular submultigraph of $M$. We say that a multigraph is \textit{locally irregular colorable} if it satisfies the locally irregular edge coloring. The \textit{locally irregular chromatic index} of a locally irregular colorable multigraph $M$, denoted by ${\rm lir}(M)$, is the smallest number of colors required in a locally irregular edge coloring of $M$. In this paper we focus on locally irregular edge coloring of 2-multigraph $^2G$ obtained from graph $G$ by doubling each edge.   
\begin{cnj}
\label{main}
For every connected graph $G$ which is not isomorphic to $K_2$ we have ${\rm lir}(^2G)\leq 2$.
\end{cnj}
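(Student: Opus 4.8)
The plan is to recast a locally irregular edge coloring of $^2G$ as a choice, for each edge $e$ of $G$, of how many of its two parallel copies are red: a function $r\colon E(G)\to\{0,1,2\}$, with the blue count being $2-r(e)$. Writing $\rho(v)=\sum_{e\ni v}r(e)$ for the red degree and $\beta(v)=2d(v)-\rho(v)$ for the blue degree (where $d(v)$ is the degree of $v$ in $G$), the coloring is locally irregular exactly when every edge $xy$ carrying a red copy satisfies $\rho(x)\neq\rho(y)$, and every edge $xy$ carrying a blue copy satisfies $\beta(x)\neq\beta(y)$. Observe that if we never split an edge (so $r(e)\in\{0,2\}$) the problem collapses to decomposing $G$ itself into two locally irregular graphs, which is impossible in general; the whole point of doubling is that it lets us assign an \emph{odd} red count to an edge, giving parity control at the vertices. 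So the first step is to understand which red-degree functions $\rho$ are realizable by some $r$, and then to seek a $\rho$ that separates adjacent vertices in both colors.

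The cleanest idea, and the one I would build the bipartite case on, is a parity trick. If we arrange that $\rho(x)\not\equiv\rho(y)\pmod 2$ for every edge $xy$, then automatically $\rho(x)\neq\rho(y)$ and, since $2d(x)$ and $2d(y)$ are even, also $\beta(x)=2d(x)-\rho(x)\neq 2d(y)-\rho(y)=\beta(y)$; both color classes are then locally irregular no matter which copies are red. The map $v\mapsto\rho(v)\bmod 2$ would be a proper $2$-coloring of $G$, so such a $\rho$ can exist only when $G$ is bipartite, and for connected bipartite $G$ with parts $A,B$ the parity pattern must be the bipartition itself. Realizing it amounts to choosing the set $S=\{e:r(e)=1\}$ of split edges so that $d_S(v)$ is odd exactly on $B$ — a parity ($T$-join) subgraph for $T=B$ — which exists precisely when $|B|$ is even; if instead $|A|$ is even we use the mirror assignment. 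This handles every connected bipartite $G$ with at least one part of even size.

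The remaining work splits into two harder pieces. First, the bipartite subcase with $|A|$ and $|B|$ both odd, where no global parity pattern is realizable: here I would permit a single carefully chosen vertex $u$ to violate the parity rule (take a $T$-join for $T=B\triangle\{u\}$, which now has even size) and then repair the edges at $u$ by hand, using the $\{0,1,2\}$ freedom on those edges to force $\rho(u)$ and $\beta(u)$ to differ from the corresponding values at every neighbor; choosing $u$ at an extreme degree should make this local separation possible. Second, the non-bipartite classes (complete graphs, complete $k$-partite graphs, wheels, odd cycles): here parity is useless on an odd cycle, and inside a color class one must distinguish same-part vertices by the \emph{actual values} of $\rho$. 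For these I would give direct, highly structured constructions that spread the achievable red degrees $\rho(v)\in[0,2d(v)]$ into pairwise distinct, antiregular-type values along a fixed vertex ordering, verifying local irregularity directly; these cases are small or symmetric enough to keep this routine.

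The genuine obstacle — and the reason the statement is only conjectured in general — is the arbitrary connected non-bipartite graph. There the parity certificate is unavailable and one must coordinate value-based separations simultaneously across many interlocking odd cycles, with no single uniform invariant doing the bookkeeping; this is precisely the difficulty that defeats the analogous Local Irregularity Conjecture for simple graphs. What one can still salvage unconditionally is a bound rather than the sharp value $2$: decompose a general graph into bipartite subgraphs (for instance via a maximum cut, or by peeling off spanning bipartite pieces) and apply the bipartite result to each piece, which yields the stated general upper bound on ${\rm lir}(^2G)$ for all $2$-multigraphs while leaving ${\rm lir}(^2G)\le 2$ for every connected $G\neq K_2$ as the crux.
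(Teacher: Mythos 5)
You are right that this statement is a conjecture: the paper does not prove it in general either, and any honest assessment of your proposal has to start there. What the paper actually supplies is (i) proofs for paths, cycles, wheels, complete and complete $k$-partite graphs, (ii) a full proof for connected bipartite graphs, and (iii) a general upper bound of $76$ obtained by rerunning the Bensmail--Merker--Thomassen decomposition with the bipartite bound of $2$ in place of their bound of $10$. Your reformulation of the problem as choosing $r\colon E(G)\to\{0,1,2\}$ and your parity observation are exactly the paper's argument for bipartite $G$ with one part of even size: the paper pairs up the vertices of the even part $X$, joins each pair by a path, and swaps one copy of each multiedge along these paths, which is precisely a $T$-join for $T=X$; every vertex of $X$ ends up with odd red and blue degree and every vertex of $Y$ with even ones. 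So on that subcase you and the paper coincide, just in different language.

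The divergence --- and the genuine gap --- is in the bipartite case with both parts odd. Your plan is to take a $T$-join for $T=B\triangle\{u\}$ and then ``repair'' the single defective vertex $u$ using the $\{0,1,2\}$ freedom on its incident edges, with the heuristic that choosing $u$ of extreme degree makes this work. This is not a proof: changing $r$ on an edge $ua$ changes $\rho(a)$ and $\beta(a)$ as well as $\rho(u)$, so the repair can reintroduce conflicts between $a$ and \emph{its} other neighbors, and for small-degree $u$ (e.g.\ a degree-one vertex whose neighbor is forced to a specific parity) there may be no admissible choice at all without propagating changes. The paper avoids this entirely by a structural tool you do not have: Lemma~\ref{twins} (from Havet, Paramaguru and Sampathkumar) produces a set $S$ of twins with $G-(S\cup N(S))$ connected, so that $S\cup N(S)$ induces a complete bipartite piece that can be colored by hand while the remainder is handled by the path-system; even then the paper needs several subcases (parity of $|S|$, whether $|S|=|N(S)|$, and a degenerate situation requiring a recoloring at a vertex $y_t$). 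Your sketch for the non-bipartite classes (``direct, highly structured constructions'') and for the general bound (``decompose into bipartite subgraphs via a maximum cut'') is likewise only a gesture; in particular a max-cut decomposition gives two pieces one of which need not be bipartite, whereas the paper must follow the much more delicate BMT scheme to reach a constant bound. In short: your even-part bipartite argument is sound and matches the paper, but the odd-odd bipartite case is unproved as written, and the rest is a research plan rather than a proof.
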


\textbf{Remark.} Conjecture \ref{main} is independent from the (2, 2) Conjecture. In our Conjecture \ref{main} we allow multiedges which are colored both red and blue whereas in the (2, 2) Conjecture all elements of the multiedge have the same color. We say that multiedge is colored \textit{red-blue} if one element of the multiedge is red and the second is blue. Another difference is that in the \linebreak (2, 2) Conjecture we do not have to double all multiedges in the multigraph $G_r' \cup G_b'$. In particular, note that a decomposition of a cycle $C_3$ described by the (2, 2) Conjecture does not exist, but multigraph $^2C_3$ obtained from $C_3$ can be decomposed into two multigraphs because the following coloring of $^2C_3$: first multiedge red, second red-blue and third blue, is locally irregular (see Figure \ref{fCycles1}).

In this paper we will show in Section 2 that Conjecture \ref{main} is true for simple graph classes like paths, cycles, wheels, complete graphs and complete $k$-partite graphs. In Section 3 we will prove Conjecture \ref{main} for all bipartite graphs. Finally in Section 4 we will prove the general bound for locally irregular chromatic index for all connected 2-multigraphs which are not isomorphic to $^2K_2$ using similar method as in \cite{Bensmail Merker Thomassen} and our result for bipartite graphs. 

\section{Simple graph classes}
In this section we consider our conjecture for paths, cycles, wheels, complete graphs and complete $k$-partite graphs. We will denote by $P_n$ a path with $n$ vertices and by $W_n$ a wheel of order $n$, which consists of cycle of length $n-1$ and one central vertex connected with all vertices on the cycle. We will call a \textit{multicycle} a multigraph which is obtained from a cycle by doubling each edge. 

\begin{tw}
\label{cycle}
Conjecture $\ref{main}$ holds for paths, cycles and wheels.
\end{tw}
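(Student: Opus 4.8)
The plan is to exhibit, in each case, an explicit red--blue coloring of the two copies of every edge of $G$ and to verify that each of the two induced submultigraphs is locally irregular. Throughout I would describe a coloring by recording for each edge $e$ its \emph{red count} $r(e)\in\{0,1,2\}$ (the number of its two parallel copies colored red), the blue count being $2-r(e)$; equivalently each multiedge is colored RR, RB or BB. Since local irregularity only constrains pairs of vertices joined by an edge of the color in question, it suffices to check, for every edge carrying a red copy, that its endpoints have distinct values of $\hat d_r$, and symmetrically for $\hat d_b$.

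For paths and cycles I would use the periodic pattern obtained by repeating the block RR, RB, BB, i.e. $r(e_1),r(e_2),r(e_3),\dots = 2,1,0,2,1,0,\dots$ along the consecutive edges; this is exactly the coloring of $^2C_3$ described in the introduction, iterated. A direct computation of $\hat d_r$ and $\hat d_b$ shows that the red copies split the host into disjoint $3$-vertex paths with red-degree pattern $(2,3,1)$, and the blue copies into disjoint $3$-vertex paths with blue-degree pattern $(1,3,2)$; in particular no monochromatic component is a (regular) single or double edge, and every monochromatic edge joins vertices of distinct degree. For a cycle $C_n$ with $3\mid n$ the pattern closes up around the cycle and we are done; for $P_n$ and for cycles with $n\not\equiv 0\pmod 3$ I would absorb the boundary/residue effect by replacing the last one or two multiedges with an ad hoc block (for instance the runs $2,2,1,0,\dots,0$, which I expect to settle the short leftover), checking the finitely many residue classes by hand.

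The genuinely different case is the wheel $W_n$, where the hub $c$ is adjacent to all $n-1$ rim vertices and has degree $2(n-1)$ in $^2W_n$. The key idea I would use is to color \textbf{all spokes red}. Then $c$ becomes an isolated vertex of the blue submultigraph, so \emph{every} hub--rim constraint in blue disappears, while in red the hub has degree $\hat d_r(c)=2(n-1)$, which for $n\ge 5$ strictly exceeds the maximum possible rim red-degree $6$ and hence automatically differs from $\hat d_r(v_i)$ for all $i$. This removes all spoke constraints and reduces the problem to coloring the rim edges: writing $r_i=r(e_i)$ around the rim cycle, both the red and the blue conditions on $v_i,v_{i+1}$ simplify to the single requirement $r_{i-1}\neq r_{i+1}$, equivalently $r_i\neq r_{i+2}$ for all $i$ modulo $n-1$. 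Such a cyclic sequence over $\{0,1,2\}$ exists because the condition is a proper $3$-coloring of the graph joining indices at cyclic distance $2$, whose connected components are cycles and are therefore $3$-colorable. The small wheel $W_4=K_4$ (where $2(n-1)=6$ can coincide with a rim red-degree) I would treat directly, e.g. with all spokes red and rim red counts $0,1,2$.

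The step I expect to be the main obstacle is the wheel: the hub's large degree makes the naive hope of balancing degrees across colors hopeless, and the workable move is precisely the asymmetric one of pushing an entire color off the hub so that only one color sees it. After that trick the remaining rim problem is the routine $3$-colorability of a disjoint union of cycles. For paths and cycles the only real work is the bookkeeping of the $n\bmod 3$ boundary cases, which I expect to be straightforward but slightly tedious.
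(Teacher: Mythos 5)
Your proposal is correct, and it reaches the result by a route that differs from the paper's in both main ingredients. For paths and cycles the paper uses a period-four pattern of whole monochromatic multiedges (blue, blue, red, red, repeated, with a special three-multiedge prefix for odd-length paths and explicit colorings of the multicycles of length $3$ to $7$), whereas you use the period-three pattern of red counts $2,1,0$; both constructions make every monochromatic component a $3$-vertex path whose middle vertex has the strictly largest degree, so both verifications are immediate in the periodic interior. Your boundary blocks for the residues $\not\equiv 0\pmod 3$ are asserted rather than checked, but they do exist: for cycles, $(2,1,0)^k$ followed by $2,2,1,0$ (respectively by $2,2,1,0,0$) closes up correctly when the length is $\equiv 1$ (respectively $\equiv 2$) modulo $3$, and the analogous terminal blocks work for paths, with $P_3$ colored entirely red; writing out these finitely many junction checks is routine but is needed for a complete proof. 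For wheels both arguments hinge on the same key move of coloring every spoke red, so that the hub is isolated in blue and has red degree $2(n-1)$ exceeding every rim red degree (at most $6$) once $n\ge 5$; but where the paper simply reuses its multicycle coloring on the rim, you observe that after this move all remaining constraints collapse to $r_{i-1}\ne r_{i+1}$ around the rim and solve this by properly $3$-colouring the circulant graph on $\mathbb{Z}_{n-1}$ with connection set $\{2\}$, whose components are cycles or single edges. This is a genuinely cleaner reduction: it decouples the wheel case from the cycle case and makes the $W_4=K_4$ exception transparent. The one computation you should make explicit there is that for each value $r_i\in\{0,1,2\}$ at least one of the red/blue constraints on the rim edge $e_i$ is active and every active one is equivalent to $r_{i-1}\ne r_{i+1}$; this is a two-line check of $r_{i-1}+r_i+2\ne r_i+r_{i+1}+2$ and $4-r_{i-1}-r_i\ne 4-r_i-r_{i+1}$.
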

\begin{proof}
First, we consider multipaths $^2P_n$ of even length. We color first two multiedges blue, next two multiedges red and we repeat this color sequence to the end of the multipath.  
Then we consider multipaths of odd length, which are not isomorphic to $^2K_2$. We color first multiedge blue, second red-blue, third red and then we color remaining multiedges in the same way as multipath of even length.

\begin{figure}[h!]
\centering
\includegraphics[width=1\textwidth]{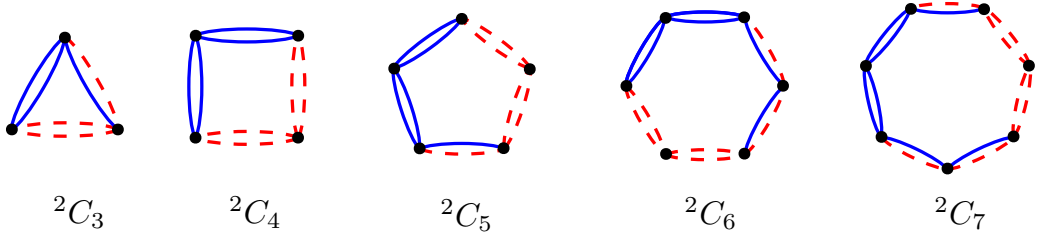}
\caption{Multicycles coloring.}
\label{fCycles1}
\end{figure}

First, we consider multicycles of length from three to seven. We color them as in Figure \ref{fCycles1}. The coloring of longer multicycle we obtain by adding multipath of length divisible by four colored in the same way as above to the appropriate colored multicycle of length from four to seven after two red multiedges.

We consider multigraph $^2W_n$ obtained from wheel $W_n$. First, we color the multicycle of length $n-1$ using the above method. Then, we color all incident multiedges to the central vertex red. Note that a central vertex in $^2W_n$ has greater degree than other vertices in $^2W_n$ for $n>4$. If $n=4$ we can easily see that all vertices have different red and blue degrees.
\end{proof}
\begin{tw}
Conjecture $\ref{main}$ holds for complete graphs, complete k-partite graphs, where $k\geq 2$.
\end{tw}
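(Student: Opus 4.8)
The plan is to translate the statement into a weighting problem. For each edge $e=xy$ of $G$ the corresponding doubled edge of $^2G$ is colored in one of three ways, so a locally irregular $2$-coloring is the same as a choice of a red weight $r(e)\in\{0,1,2\}$ with blue weight $b(e)=2-r(e)$, where $r(e)$ records how many of the two copies of $e$ are red. Writing $\hat d_r(v)=\sum_{e\ni v}r(e)$ and $\hat d_b(v)=\sum_{e\ni v}b(e)$, the red class is locally irregular exactly when $\hat d_r(x)\neq\hat d_r(y)$ for every edge $xy$ with $r(xy)\geq1$, and symmetrically for blue. Since every pair of vertices in different parts is adjacent, I would aim for the strong and convenient condition that the red degrees (and likewise the blue degrees) are \emph{constant on each part and pairwise distinct between parts}; this at once gives a locally irregular red–blue coloring.

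First I would dispose of complete graphs $K_n$ (the case of $n$ parts of size $1$). Here $\hat d_r(v)+\hat d_b(v)=2(n-1)$ for every $v$, so $\hat d_r(x)=\hat d_r(y)$ if and only if $\hat d_b(x)=\hat d_b(y)$; as all pairs are adjacent, a valid coloring exists if and only if all red degrees are pairwise distinct, and that single condition is also sufficient. The problem thus collapses to finding a $\{0,1,2\}$-weighting of $K_n$ with pairwise distinct weighted degrees, which is the standard fact that $K_n$ admits a three-valued irregular weighting. I would give this explicitly, for instance by setting $r(e)=\mathbf 1[e\in H_1]+\mathbf 1[e\in H_2]$ for two spanning subgraphs whose degree contributions separate the vertices, choosing the target degree sequence so that its (necessarily even) sum has the correct parity.

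Next, for a general complete $k$-partite graph $K_{n_1,\dots,n_k}$ with $N=\sum_i n_i$, I would use a \emph{block} weighting: assign one weight $w_{ij}\in\{0,1,2\}$ to \emph{all} edges joining $V_i$ to $V_j$. Then every vertex of $V_j$ gets the same red degree $E_j=\sum_{i\neq j}n_i\,w_{ij}$ and the same blue degree $\sum_{i\neq j}n_i(2-w_{ij})=2(N-n_j)-E_j$. Hence it suffices to choose a symmetric $\{0,1,2\}$-weighting $w$ of the reduced complete graph $K_k$ so that both $w$ and its complement $2-w$ induce injective $n_i$-weighted part-degrees. When the sizes $n_i$ are pairwise distinct this already holds for $w\equiv1$ (all multiedges split), since then $E_j=N-n_j$; in general I would perturb $w$ away from the all-ones weighting to break ties, keeping each part's degree inside a dedicated band so that no two parts coincide in either color.

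The hard part will be the perfectly balanced configurations, where the block reduction is powerless: for two equal parts $V_i,V_j$ of size $s$ one always has $E_i=E_j$ regardless of $w_{ij}$, and balanced $K_{s,s,\dots,s}$ inherits this symmetry. For these I would abandon uniform blocks and weight individual edges; for $K_{s,s}$ this amounts to building an $s\times s$ matrix with entries in $\{0,1,2\}$ all of whose row and column sums are distinct (so that the $2s$ red degrees, and hence the blue degrees $2s-(\cdot)$, are distinct), a routine transportation-type realization, with the balanced multipartite case handled analogously and a few small instances (${}^2K_3$, and $K_{s,s}$ for tiny $s$) checked by hand. I expect the principal obstacle to be orchestrating these non-uniform constructions for the balanced classes while simultaneously controlling the degree bands, so that the perturbations introduced to separate equal-size parts do not manufacture new coincidences across parts of different sizes.
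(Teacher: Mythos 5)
Your framework (red weights $r(e)\in\{0,1,2\}$ with blue weights $2-r(e)$, aiming for degrees that are constant on each part and separated between parts) is sound, and two pieces of it are correct and arguably cleaner than the paper's: the reduction of $^2K_n$ to an irregular $\{0,1,2\}$-weighting of $K_n$ (equivalently, the irregularity strength of $K_n$ being $3$), and the observation that $w\equiv 1$ settles the case of pairwise distinct part sizes. However, there is a genuine gap exactly where the theorem is hardest. First, the claim that for two parts of equal size $s$ ``one always has $E_i=E_j$ regardless of $w_{ij}$'' is false: $E_i=\sum_{l\neq i}n_l w_{il}$ also depends on the blocks toward the \emph{other} parts, and those can separate $E_i$ from $E_j$ (in $K_{p,p,r}$ take $w_{XZ}\neq w_{YZ}$, which is precisely what the paper does). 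The block method is genuinely powerless only for $K_{s,s}$ itself; for balanced $K_{s,\dots,s}$ with $k\geq 3$ it reduces to the very same irregular $\{0,1,2\}$-weighting of the reduced $K_k$ that you invoke for complete graphs. Because of this misdiagnosis you route the balanced and mixed cases through ad hoc non-uniform edge weightings, and for the configuration that actually carries the difficulty --- several parts of one repeated size coexisting with parts of other sizes --- you give no construction at all, explicitly deferring it as ``the principal obstacle.'' As written, the argument only covers $K_n$, $K_{s,s}$, and the pairwise-distinct-sizes case.

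Two further remarks. For $K_{s,s}$ your demand that all $2s$ row and column sums be pairwise distinct is stronger than needed: vertices inside a part are non-adjacent, so it suffices that no row sum equals a column sum, which the paper achieves trivially by coloring all multiedges at one chosen vertex red and everything else blue (sums $2s,0,\dots,0$ versus $2,\dots,2$). More importantly, the paper sidesteps your obstacle entirely with an elementary recursion: explicit block colorings for $k\leq 3$ (a short case analysis on which of the three sizes coincide, using red, blue and red-blue blocks), and then, ordering the parts by nondecreasing size, adding each further part with all of its multiedges to the already-colored vertices given a single color, alternating blue/red; the same one-vertex-at-a-time alternation handles $^2K_n$ starting from $^2C_3$. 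If you want to salvage your approach, the cleanest route is to stay at the block level and prove that for every multiset of part sizes with $k\geq 3$ there is a symmetric $\{0,1,2\}$-matrix $w$ with $E_i\neq E_j$ and $2(N-n_i)-E_i\neq 2(N-n_j)-E_j$ for all $i\neq j$, keeping the individual-edge construction only for $K_{s,s}$, rather than descending to individual edges in the multipartite case.
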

\begin{proof}
Assume that all multigraphs considered in this proof are not isomorphic to $^2K_2$. 

\textbf{Complete 2-multigraph.} We construct the coloring of this multigraph starting from the coloring of $^2C_3$ presented in Figure  \ref{fCycles1}. Then, we color blue all multiedges from the fourth vertex to vertices that have colored some incident multiedges. Next, we color red all multiedges from the fifth vertex to vertices that have colored some incident multiedges. Then, we color blue all multiedges from the sixth vertex to vertices that have colored some incident multiedges. We continue this procedure until we color the whole 2-multigraph.

\textbf{Complete $k$-partite 2-multigraph.} First, we assume that $k=2$ and we denote independent sets by $X$ and $Y$. We set $|X|=p$ and $|Y|=q$. If $p\neq q$ then 2-multigraph $^2K_{p, q}$ is locally irregular. On the opposite, if $p=q$ then we choose one vertex $v$ and we color all incident multiedges with $v$ red and we color all remaining multiedges blue. We can easily see that this coloring is locally irregular.

We assume that $k=3$ and we denote independent sets by $X$, $Y$, $Z$. We set $|X|=p$, $|Y|=q$, $|Z|=r$. If $p$, $q$, $r$ are pairwise distinct then 2-multigraph $^2K_{p, q, r}$ is locally irregular and we color all multiedges red. If $p=q \neq r$ then we color all multiedges from the set $X$ to $Z$ blue and we color all remaining multiedges in $^2K_{p, q, r}$ red. Thus, all vertices in this 2-multigraph: in $X$ have red degree equal to $2p$ and blue degree equal to $2r$, in $Y$ have red degree equal to $2p+2r$ and blue degree equal to 0, in $Z$ have red and blue degree equal to $2p$ therefore this coloring is locally irregular. We use analogical coloring when $p\neq q = r$ and $p=r \neq q$. If $p=q=r$ then we color all multiedges: from the set $X$ to $Z$ red, from the set $Y$ to $Z$ blue and from the set $X$ to $Y$ red-blue. Thus, all vertices: in $X$ have red degree equal to $3p$ and blue degree equal to $p$, in $Y$ have red degree equal to $p$ and blue degree equal to $3p$, in $Z$ have red and blue degree equal to $2p$ therefor this coloring is locally irregular.

We assume that $k>3$. We denote independent sets according to the increasing number of vertices by $A_1, \dots, A_k$. If two independent sets have the same number of vertices then we order them arbitrarily. First, we color induced submultigraph by sets $A_1, A_2, A_3$ using the same method as for complete $3$-partite 2-multigraphs from previous case. Then, we color all multiedges from the set $A_4$ to sets $A_1, A_2, A_3$ blue. Next, we color all multiedges from $A_5$ to sets $A_1, \dots, A_4$ red. Next, we color all multiedges from $A_6$ to sets $A_1, \dots, A_5$ blue. We continue this procedure until we color the whole 2-multigraph. We can easily see that this coloring is locally irregular.
\end{proof}

\section{Bipartite graphs}
First, we introduce notion and lemma which will be useful to prove our main result for bipartite graphs. Let $G$ be a graph. For a set $S$ of vertices, we put $N(S):= \bigcup\limits_{s\in S}N(s)$. By \textit{twins} we mean two vertices $x$ and $y$ such that $N(x) = N(y)$. Note that the relation of being a twin is reflexive. The following lemma was established in \cite{Havet Paramaguru Sampathkumar}. 

\begin{lem}
\label{twins}
Let $G=(X, Y; E)$ be a connected bipartite graph. Then there exists a nonempty set of twins $S$ such that $G-(S \cup N(S))$ is connected.
\end{lem}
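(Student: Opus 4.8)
The plan is to use a breadth-first search (BFS) layering of $G$ and to locate the desired twin set in the outermost layer. First I would dispose of the degenerate cases: if $G$ is complete bipartite on at most two vertices (that is, $G\in\{K_1,K_2\}$) the claim is immediate with the convention that a graph on at most one vertex is connected. Otherwise $G$ has two non-adjacent vertices, so I may root a BFS at a vertex $r$ whose eccentricity is at least $2$, obtaining layers $L_0=\{r\},L_1,\dots,L_k$ with $k\ge 2$. Since $G$ is bipartite, consecutive layers lie in opposite parts and there are no edges inside a layer; in particular every vertex of $L_k$ has all its neighbours in $L_{k-1}$.

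Next I would make the crucial choice: pick $v\in L_k$ whose neighbourhood $N(v)$ is inclusion-minimal among the neighbourhoods of the vertices of $L_k$, and set $S:=\{u: N(u)=N(v)\}$, the twin class of $v$. I would first check that $S\subseteq L_k$: any twin $u$ of $v$ has $N(u)=N(v)\subseteq L_{k-1}$ and a nonempty neighbourhood, so the BFS distance of $u$ from $r$ equals $k$, forcing $u\in L_k$. Thus $S\subseteq L_k$ and $N(S)=N(v)\subseteq L_{k-1}$, so deleting $S\cup N(S)$ affects only the two outermost layers.

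It remains to prove that $H:=G-(S\cup N(S))$ is connected, and this is where the minimality of $v$ does the work. Let $T$ be the BFS tree and let $T'$ be $T$ with all of $L_k$ deleted; since the $L_k$-vertices are leaves of $T$, $T'$ is a spanning tree of $L_0\cup\dots\cup L_{k-1}$. Every vertex of $L_{k-1}$ is a leaf of $T'$ (its tree-children all lay in $L_k$), so deleting the leaf set $N(S)\subseteq L_{k-1}$ leaves $T'-N(S)$ connected; this yields a connected ``core'' spanning $L_0\cup\dots\cup L_{k-2}\cup(L_{k-1}\setminus N(S))$. Finally, each surviving vertex $w\in L_k\setminus S$ must still reach this core: since $N(w)\ne N(v)$ and $N(v)$ is inclusion-minimal, we cannot have $N(w)\subsetneq N(v)$, hence $N(w)\not\subseteq N(S)$ and $w$ has a neighbour in $L_{k-1}\setminus N(S)$. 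Therefore every remaining vertex is joined to the core and $H$ is connected.

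The main obstacle is exactly this last step: a naive choice of twin class could isolate some vertex of the last layer whose neighbourhood is entirely swallowed by $N(S)$. Choosing $v$ with an inclusion-minimal neighbourhood is precisely what prevents this, since it guarantees every other last-layer vertex keeps a neighbour outside $N(S)$; the leaf structure of $L_{k-1}$ in the BFS tree then secures connectivity of the lower layers for free. I would also record that $S$ is nonempty (it contains $v$), and note that when $G$ is a star, rooting $r$ at a leaf already produces $k=2$, so stars need no treatment beyond the $K_1,K_2$ convention.
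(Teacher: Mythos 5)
The paper does not actually prove Lemma \ref{twins}: it is imported verbatim from the Havet--Paramaguru--Sampathkumar report, so there is no in-paper proof to compare yours against. Judged on its own merits, your argument is the standard one for this statement and its main mechanism is sound: layering by BFS from a root of eccentricity $k\ge 2$, choosing $v\in L_k$ with inclusion-minimal neighbourhood, using the leaf structure of the BFS tree to keep $L_0\cup\dots\cup L_{k-2}\cup(L_{k-1}\setminus N(S))$ connected, and using minimality to guarantee every surviving vertex of $L_k$ retains a neighbour outside $N(S)$. All of those steps check out.

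There is one step you need to repair: the claim $S\subseteq L_k$. Your justification amounts to $d(r,u)=1+\min_{w\in N(u)}d(r,w)=k$, which is invalid for $u=r$, and the root really can be a twin of $v$: this happens precisely when $k=2$ and $N(v)=L_1=N(r)$. Your own side remark about stars is an instance --- for $K_{1,2}$ rooted at a leaf $r$, the other leaf $v$ has $N(v)=N(r)$, so $S=\{r,v\}$, $N(S)$ is the centre, and $S\cup N(S)=V(G)$. In general, in this exceptional case minimality forces $N(w)=L_1$ for every $w\in L_2$, so $S=\{r\}\cup L_2$ and $G-(S\cup N(S))$ is the empty graph; the lemma then survives only via the ``at most one vertex is connected'' convention, and your assertion that only the two outermost layers are touched is false. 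The cleanest fix is to define $S$ as the twin class of $v$ restricted to $L_k$, i.e.\ $S:=\{u\in L_k: N(u)=N(v)\}$; this is still a nonempty set of pairwise twins with $N(S)=N(v)\subseteq L_{k-1}$, the rest of your argument goes through unchanged, and in the exceptional case the remainder is the single vertex $r$ rather than the empty graph. With that adjustment the proof is complete.
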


Now we are ready to prove our main result for bipartite graphs.

\begin{tw}
\label{bipartite graph}
For every connected bipartite graph $G$ which is not isomorphic to $K_2$, the multigraph $^2G$ satisfies ${\rm lir}(^2G)\leq 2$.
\end{tw}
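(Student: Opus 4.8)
The plan is to prove the statement by induction on the number of vertices of $G$, peeling off a set of twins supplied by Lemma~\ref{twins}, with a clean parity construction carrying the bulk of the cases.

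First I would record a sufficient condition that avoids any case analysis. Write $G=(X,Y;E)$ and recall that a multiedge of $^2G$ may be colored in three ways: both copies red, both copies blue, or one copy of each (\emph{red-blue}). Color a subset $F\subseteq E$ red-blue and color every other edge with both copies the same color. Then the parity of $\hat d_r(v)$ equals the parity of $\hat d_b(v)$ (their sum is the even number $2\deg_G(v)$) and both equal $|F\cap\delta(v)|\bmod 2$, since a monochromatic double edge changes a color degree by $0$ or $2$ while a red-blue edge changes each color degree by $1$. Hence if I can choose $F$ with $\deg_F(x)$ even for every $x\in X$ and $\deg_F(y)$ odd for every $y\in Y$, then in both colors every vertex of $X$ has even degree and every vertex of $Y$ has odd degree; since every edge of the bipartite graph joins $X$ to $Y$, the two endpoints of any monochromatic copy differ in that color's degree (even versus odd), so both color classes are locally irregular and ${\rm lir}(^2G)\le 2$. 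A standard parity-subgraph argument shows such an $F$ exists in the connected graph $G$ exactly when $\sum_{y\in Y}1=|Y|$ is even; by symmetry the same works when $|X|$ is even. This settles every connected bipartite $G$ in which at least one color class has even order.

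The remaining case is $|X|$ and $|Y|$ both odd, and this is where I would use the twins lemma together with induction. The base cases are the complete bipartite graphs, already handled by the theorem on complete $k$-partite $2$-multigraphs, together with small graphs and paths covered by Theorem~\ref{cycle}. For the inductive step, apply Lemma~\ref{twins} to obtain a nonempty set of twins $S$ (all lying on one side, say $S\subseteq X$, with common neighborhood $T=N(S)\subseteq Y$) such that $G'=G-(S\cup T)$ is connected. If $G'$ is empty then $G=K_{|S|,|T|}$ is complete bipartite and we are done; otherwise color $^2G'$ either by the parity construction above (when one side of $G'$ has even order) or by the induction hypothesis, and then extend the coloring across the doubled edges inside $S\cup T$, namely the complete bipartite $S$--$T$ edges and the boundary edges between $T$ and $X':=X\setminus S$. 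Here the twin structure is a genuine asset: the vertices of $S$ are pairwise non-adjacent and so are the vertices of $T$, so within a single color no two twins ever need distinct degrees, which leaves ample freedom to assign degrees to $S$ and $T$ and to repair the single parity defect with red-blue multiedges, exactly as the red-blue middle edge does in the odd multipath of Theorem~\ref{cycle}.

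The main obstacle, and the step requiring the most care, is the extension across the boundary. The boundary edges between $T$ and $X'$ are incident to vertices $x\in X'$ whose red and blue degrees were already fixed while coloring $^2G'$; adding these edges perturbs $\hat d_r(x)$ and $\hat d_b(x)$ and may collide with the (unchanged) degrees of the neighbors of $x$ in $Y':=Y\setminus T$, destroying local irregularity inside $G'$. At the same time each $t\in T$ must be distinguished from both its neighbors in $S$ and its neighbors in $X'$. These two families of constraints are coupled through the shared boundary edges, and coordinating them while respecting the global parity count is the crux of the argument. The slack needed to do so comes precisely from the doubling: each boundary edge can contribute $0$, $1$, or $2$ to a color degree, so the increment at each $x\in X'$ can be tuned to avoid the finitely many forbidden values, and the choice of the $S$--$T$ coloring separates $T$ from $S$ by making their degrees differ in magnitude or parity. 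Verifying that these choices can always be made simultaneously is the technical heart of the proof.
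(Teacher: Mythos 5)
Your first half is sound: the observation that a red--blue set $F\subseteq E$ controls the common parity of $\hat d_r$ and $\hat d_b$ at every vertex, and that a $T$-join with odd demand exactly on $Y$ exists in a connected graph iff $|Y|$ is even, is a clean reformulation of the paper's ``path-system'' argument and correctly disposes of every bipartite graph with one side of even order. The problem is the case where $|X|$ and $|Y|$ are both odd: there your text is a plan rather than a proof, and you say so yourself (``Verifying that these choices can always be made simultaneously is the technical heart of the proof''). That verification is the entire content of the theorem in this case, so as written there is a genuine gap. Moreover, the inductive framework you propose makes the gap hard to close: if $^2G'$ with $G'=G-(S\cup T)$ is colored by an uncontrolled induction hypothesis, a boundary vertex $x\in X'$ with a single edge to $T$ can only have its red degree shifted by $0$, $1$ or $2$, while its neighbours in $Y'$ may already occupy all three of the resulting values; the ``slack from doubling'' is not by itself enough, and the coupling through the degrees of the vertices of $T$ (each of which must receive one consistent degree distinguishing it from all of $S$ and all of its neighbours in $X'$) is exactly the difficulty you defer.

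The paper avoids this by never inducting and never using an arbitrary coloring of $G[X'\cup Y']$: it always applies the parity construction there, which is possible because $G[X'\cup Y']$ is connected (Lemma \ref{twins}) and because $|X'|=|X|-s$ is even when $s$ is odd, while for $s$ even one colors the two multiedges of a path $x_0y_0z_0$ (with $x_0\in S$, $y_0\in T$, $z_0\in X'$) red--blue to shift one parity defect and works with $X'\setminus\{z_0\}$. After that, every vertex of $X'$ is odd and every vertex of $Y$ is even in both colors, and the $S$--$T$ and $T$--$X'$ multiedges are added \emph{monochromatically}, which preserves all these parities; hence no conflict between $X\cup\{x_0\}$ and $Y$ can arise, and the only remaining checks are the explicit degree counts separating $S$ from $T$ (e.g.\ blue degree $2t$ on $S$ versus $2s$ or $\ge 2s+2$ on $T$), handled in a few subcases, one of which needs a further local recoloring at a vertex $y_t\in T$. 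A secondary point you also omit is why the construction may assume each vertex of $T$ has a neighbour in $X'$: the paper gets this by choosing the twin set $S$ minimizing $|S|+|N(S)|$. If you want to rescue your outline, replace ``color $^2G'$ by the induction hypothesis'' by ``color $^2G[X'\cup Y']$ by the parity construction after moving one parity defect into $S\cup T$ via a red--blue path,'' and then the boundary extension becomes the finite degree bookkeeping the paper actually carries out.
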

\begin{proof}
Let $G=(X, Y; E)$ be a connected bipartite graph. First, we consider the situation when $|X|$ or $|Y|$ is even. Assume that $|X|$ is even. Put $X=\{x_1, x_2, \dots, x_{2p}\}$. For every $i$, $1 \leq i \leq p$, let $P_i$ be a path joining $x_{2i-1}$ to $x_{2i}$ in $G$. We consider 
multigraph $^2G$. We start with all multiedges colored blue. By odd vertex we will call vertex which has odd red and blue degree, analogically by even vertex we will call vertex which has even red and blue degree. Then, for each $i$,  $1 \leq i \leq p$, we exchange colors along $P_i$. Thus, at the end of this process, every vertex in $X$ is odd and every vertex in $Y$ is even.  Thus, we get the claim in this case. We call this set of paths \textit{path-system with ends in} $X$. 

Assume that $|X|$ and $|Y|$ are odd. By Lemma \ref{twins}, there is a set $S$ of twins such that $G-(S \cup N(S))$ is connected. Without loss of generality we may assume that $S \subset X$. Note that the subgraph induced by $S \cup N(S)$ is complete bipartite. If we have more than one such set $S$ we take $S$ with the smallest $|S|+|N(S)|$. Thus, each vertex in $N(S)$ has a neighbour in $X \setminus S$. If this is not true we can take smaller set $S'$ of twins such that $G-(S' \cup N(S'))$ is connected, which is the subset of $N(S)$ and vertices from $N(S)$ which has neighbour not in $S$ are not in $S'$. Therefore, we get contradiction with the fact that $S$ has the smallest $|S|+|N(S)|$. Put $X':=X \setminus S$, $T:=N(S)$, $Y':=Y \setminus N(S)$, $s:=|S|$ and $t:=|T|$. Note that we do not have any edge between $S$ and $Y'$ in graph $G$ (see Figure \ref{f1}). We double all edges in graph $G$. We will consider two main caseses. \\
\textbf{Case 1:}\quad $s$ is odd. First, we consider the subcase when $s\ne t$. Notice that $|X'|$ is even. Thus, we color submultigraph induced by $X' \cup Y'$ in $^2G$ using the path-system with ends in $X'$. More precisely we color this path-system with ends in $X'$ red and the rest multiedges in this submultigraph blue. Then, we color all multiedges between the vertex set $S$ and $T$ blue and we color all multiedges between $T$ and $X'$ red (see Figure \ref{f1}).
\begin{figure}[h!]
\centering
\includegraphics[width=6.3cm]{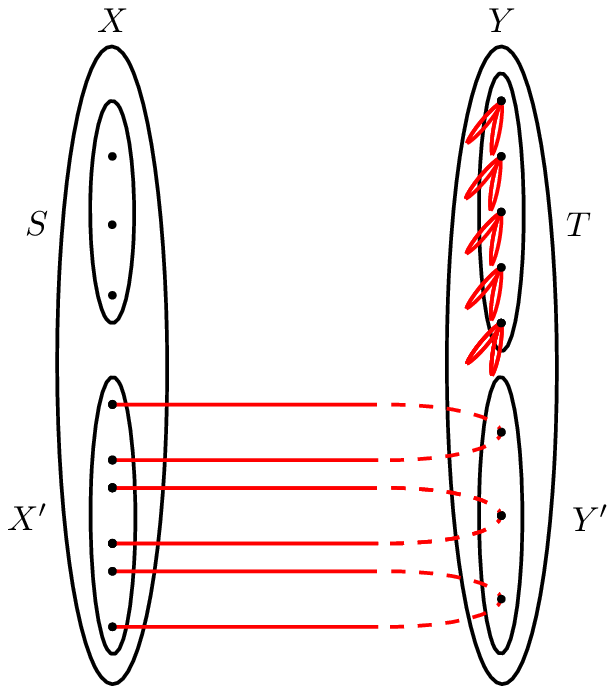}
\caption{The coloring of bipartite 2-multigraph $^2G$ in case 1, when $s\ne t$.}
\label{f1}
\end{figure}
We can easily see that this coloring of $^2G$ is locally irregular. Indeed blue multigraph have two components: multigraph induced by $S \cup T$ and multigraph induced by $X \cup Y'$ without path-system with ends in $X'$. From our assumption that $s\ne t$, blue multigraph induced by $S \cup T$ is locally irregular. Note also that in blue multigraph induced by $X \cup Y'$ without path-system with ends in $X'$ and red multigraph, all vertices in $X'$ are odd and all vertices in $Y$ are even.

We consider the situation when $s=t$. Notice that $|X'|$ is even. Thus, we color submultigraph induced by $X' \cup Y'$ in $^2G$ using the same method as in the situation when $s\ne t$. Then, we color all remaining multiedges in $^2G$ blue. Note that multigraph induced by $S \cup T$ is blue.
So, this coloring of $^2G$ is locally irregular, because all vertices in $S$ have blue degrees equal to $2t$ and are distinct from blue degrees of vertices in $T$ and all vertices in $X'$ are odd and in $Y$ even. Thus we are done. \\

\textbf{Case 2:}\quad $s$ is even. We will consider two main subcases. We denote by $x_0$ arbitrary vertex in $S$ and we take the vertex $y_0$ in $T$ in such a way that $z_0$ is a neighbour of $y_0$ in $X'$. We color multiedges $x_0y_0$ and $y_0z_0$ red-blue. Notice that $|X' \setminus \{z_0\}|$ is even. Thus, we color submultigraph induced by $(X' \setminus \{z_0\}) \cup Y'$ in $^2G$ using the path-system with ends in $X'\setminus \{z_0\}$. More precisely we color this path-system with ends in $X'\setminus \{z_0\}$ red and the rest of multiedges in this submultigraph blue. Then, we color all multiedges from the vertex $z_0$ to its neighbours in $Y'$ blue.  Note that path-system with ends in $X' \setminus \{z_0\}$ and path $x_0y_0z_0$ create path-system with ends in $X' \cup \{x_0\}$. This part of the coloring of $^2G$ is the same for all subcases.

\textbf{Subcase 2a:}\quad $s\neq t$. We color all multiedges between $T \setminus \{y_0\}$ and $X'$ red. Next we color all multiedges edges from $y_0$ to $X'$ except for $y_0z_0$ red and all remaining multiedges in $^2G$ blue. This coloring of $^2G$ is presented in Figure \ref{f2a}.  
\begin{figure}[h!]
\centering
\includegraphics[width=6.3cm]{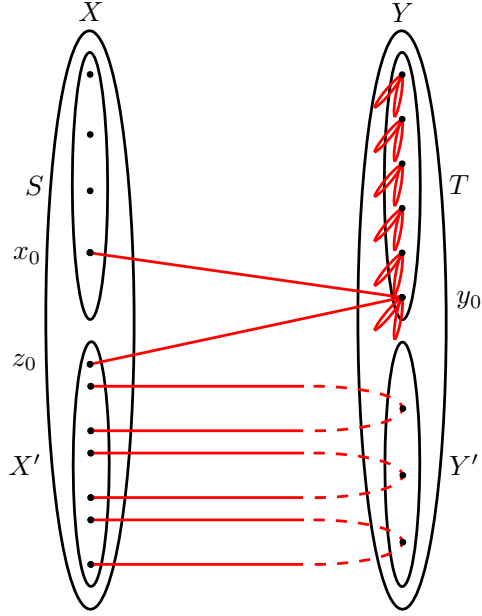}
\caption{The coloring of bipartite 2-multigraph $^2G$ in case 2, when $s\neq t$.}
\label{f2a}
\end{figure}

Notice that in this coloring of $^2G$ all vertices in $X' \cup \{x_0\}$ are odd and in $Y$ even. Note also that all vertices in $S\setminus \{x_0\}$ have blue degrees equal to $2t$ and all vertices in $T$ including $y_0$ have blue degrees equal to $2s$. Thus, this coloring of $^2G$ is locally irregular. 

\textbf{Subcase 2b:}\quad $s=t$. We start our coloring of $^2G$ from the common part for all subcases. Then, we color all multiedges between vertices from the set $T \setminus \{y_0\}$ and $X'$ blue. Next, we color all multiedges from $y_0$ to $X'$ except for $y_0z_0$ blue. At the end, we color all remaining multiedges in the 2-multigraph $^2G$ blue. This initial coloring of bipartite 2-multigraph $^2G$, when $s$ is even and $s=t$ is shown in Figure \ref{f2b}.  
\begin{figure}[h!]
\centering
\includegraphics[width=6.3cm]{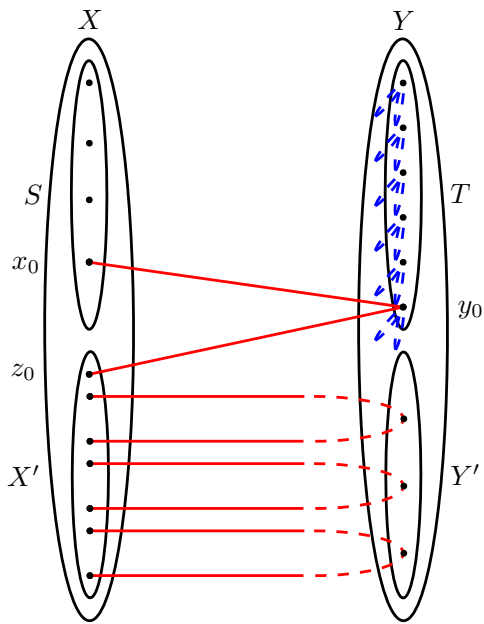}
\caption{The initial coloring of bipartite 2-multigraph $^2G$ in case 2, when $s=t$.}
\label{f2b}
\end{figure}

Note that in this coloring of $^2G$ all vertices in $X' \cup \{x_0\}$ are odd and in $Y$ even. Notice that each vertex $x$ from the set $S\setminus \{x_0\}$ has $\hat d_b(x)=2t$ and each vertex $y$ from the set $T\setminus \{y_0\}$ has $\hat d_b(y)\geq 2s+2$. We also see that $\hat d_b(y_0)\geq 2s$. If we have more than one multiedge between $y_0$ and the set $X'$, the vertex $y_0$ has $\hat d_b(y_0)\geq 2s+2$. Thus, in this situation we do not have conflict between vertices from $S\setminus \{x_0\}$ and $T$, therefore this coloring is locally irregular.

Now we consider the particular situation when $s=t$ and it is exactly one multiedge between $y_0$ and the set $X'$ for each $y_0\in T$. Let $y_t$ be an arbitrary vertex in $T$ distinct from $y_0$. We recolor all multiedges from the vertex $y_t$ to the set $S$ red in the initial coloring of bipartite 2-multigraph $^2G$, when $s$ is even and $s=t$ (see Figure \ref{f2c}). Thus, each vertex $x$ from the set $S\setminus \{x_0\}$ has $\hat d_b(x)=2t-2$ and each vertex $y$ from the set $T\setminus \{y_0, y_t\}$ has $\hat d_b(y)\geq 2s+2$. We also see that $\hat d_b(y_0)= 2s$. Note that we do not have conflicts caused by red degrees in $^2G$. Thus, we get our claim in this subcase.
\begin{figure}[h!]
\centering
\includegraphics[width=6.3cm]{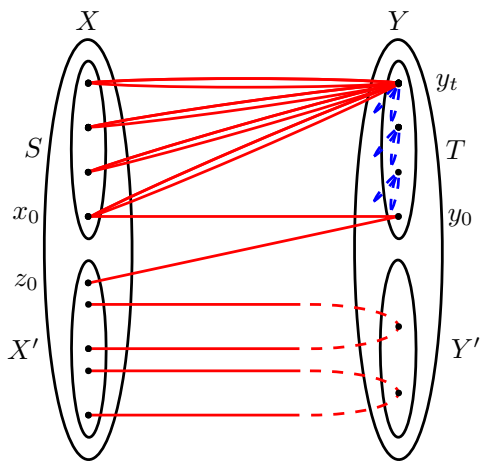}
\caption{The coloring of bipartite 2-multigraph $^2G$ in case 2, when $s=t$ and is exactly one multiedge between $y_0$ and the set $X'$.}
\label{f2c}
\end{figure}
\end{proof}

As an immediate consequence of the above theorem we get the following result. 
\begin{cor}
For every tree $T$ which is not isomorphic to $K_2$ we have ${\rm lir}(^2T)\leq 2$.
\end{cor}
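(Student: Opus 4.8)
The plan is to deduce this corollary directly from Theorem~\ref{bipartite graph}, so the entire task reduces to verifying that every tree satisfies the hypotheses of that theorem. First I would recall that a tree is by definition a connected acyclic graph, so connectedness is immediate. The only nontrivial point to check is bipartiteness: since a tree contains no cycles at all, in particular it contains no odd cycle, and a graph is bipartite precisely when it has no odd cycle. Hence any tree $T$ admits a bipartition $T=(X,Y;E)$, and I would fix such a bipartition.

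With these observations in place, the argument concludes at once. If $T$ is a tree not isomorphic to $K_2$, then $T$ is a connected bipartite graph not isomorphic to $K_2$, so Theorem~\ref{bipartite graph} applies verbatim and gives ${\rm lir}(^2T)\leq 2$. There is essentially no obstacle to surmount here, since the result is a genuine specialization of the bipartite case; the only step worth stating explicitly is the bipartiteness of trees, which is the sole hypothesis of Theorem~\ref{bipartite graph} that does not already appear in the definition of a tree. (One could alternatively reprove the corollary in a self-contained way by running the path-system construction from the proof of Theorem~\ref{bipartite graph} directly on $T$, but invoking the theorem is cleaner and avoids repeating the case analysis.)
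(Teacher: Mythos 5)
Your proposal is correct and matches the paper exactly: the paper states this corollary as an immediate consequence of Theorem~\ref{bipartite graph}, relying on precisely the observation you make explicit, namely that every tree is a connected bipartite (acyclic, hence odd-cycle-free) graph. Nothing further is needed.
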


\section{General bound for locally irregular \\ chromatic index for 2-multigraphs}
First, we prove the following lemma concerning the family $\mathfrak{T}$.

\begin{lem}
\label{family f}
For every graph $G$ from the family $\mathfrak{T}$, the multigraph $^2G$ satisfies ${\rm lir}(^2G)\leq 3$.
\end{lem}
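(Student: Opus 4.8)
The plan is to induct on the recursive construction of the family $\mathfrak{T}$, extending a locally irregular $3$-coloring of $^2G_0$ to $^2G$ whenever $G$ is obtained from some $G_0\in\mathfrak{T}$ by one attachment operation. For the base case $G=K_3$ we already have a locally irregular coloring of $^2K_3={}^2C_3$ using only two colors (Figure \ref{fCycles1}), so ${\rm lir}(^2K_3)\le 2\le 3$. The two possible attachment operations give two inductive cases: the new piece is either a pendant doubled path attached at a degree-$2$ triangle vertex $v$ (even-path case), or a doubled path attached at $v$ whose far end is a vertex of a brand-new triangle $T'$ (odd-path case).

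For the inductive step I would keep the coloring of $^2G_0$ essentially unchanged and only color the newly added multiedges. Let $p,q$ be the other two vertices of the triangle $T_v$ containing $v$. In the even-path case the new piece is a doubled path $v u_1\cdots u_{2k}$ ending in a leaf; I color it by the alternating multipath scheme used in the proof of Theorem \ref{cycle}, read from the leaf inward so that every interior vertex and the leaf become locally irregular, or equivalently I use the parity (odd/even) device from the proof of Theorem \ref{bipartite graph} to force all interior vertices to be even. In the odd-path case I first color $^2T'={}^2C_3$ by the two-coloring of Figure \ref{fCycles1} (this is free, since $T'$ is entirely new), and then color the connecting doubled path as above; both endpoints of the path are now triangle vertices, but the one inside $T'$ is unconstrained because $T'$ was just colored.

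The main obstacle is the single constrained junction at $v$: adding the doubled edge $vu_1$ raises the degree of $v$ in one or two color classes, and we must keep $v$ distinguished from its old neighbours $p,q$ in \emph{every} color class while leaving the rest of $^2G_0$ untouched, and simultaneously keep $u_1$ locally irregular. I would resolve this from the red/blue/third-color degree profile of $v,p,q$ in $^2G_0$: either assign both copies of $vu_1$ the color already ``largest'' at $v$, so the gap to $p$ and $q$ only widens, or, when no safe color in $\{1,2\}$ is available, spend the third color on $vu_1$ (and, if needed, on $u_1u_2$ to separate $u_1$ from $v$) so that $v$ is automatically distinguished from $p,q$ in color $3$. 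To guarantee a safe choice at \emph{every} attachment, I expect the induction hypothesis must be strengthened to record the color-degree profile at each current degree-$2$ triangle vertex, since a neighbour such as $p$ may itself carry third-color edges from an earlier attachment; one then checks that the base coloring of $^2C_3$ and each inductive step preserve a profile admitting a safe extension. Verifying this invariant across both attachment types, and confirming local irregularity at $u_1$ and at the shared vertex of $T'$, is the bulk of the routine work, while the genuinely delicate point — and exactly where the slack of the third color is spent — is guaranteeing the safe color at $v$.
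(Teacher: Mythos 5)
Your skeleton --- induction on the recursive construction of $\mathfrak{T}$, base case $^2K_3$ two-colored as in Figure \ref{fCycles1}, attached multipaths colored in monochromatic length-two blocks, the new triangle of the odd-path case colored freely --- is exactly the paper's. The genuine gap is the one you flag yourself: you never establish that a safe color exists at the junction $v$, and the machinery you propose for it (a strengthened induction hypothesis recording the color-degree profile at every current degree-$2$ triangle vertex) is left unverified and is also unnecessary. Worse, your first fallback is unsound as stated: giving both copies of $vu_1$ the color that is ``largest at $v$'' can raise $v$'s degree in that color to $4$, which collides with $u_1$, since $u_1$ is the center of a monochromatic $^2P_3$ and hence also has degree $4$ there (take $v$ to be the vertex of the Figure \ref{fCycles1} coloring of $^2C_3$ with red and blue degree both equal to $2$).

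The paper closes the gap with two observations that your profile-tracking obscures. First, one can maintain the invariant that every triangle of the construction carries only two of the three colors: the starting triangle by Figure \ref{fCycles1}, and a new triangle reached by an odd path because the paper decomposes ``odd multipath plus terminal triangle'' jointly into multipaths of length two, so that triangle simply inherits the two colors alternating along the path. Hence at any attachment vertex $v$ there is always a color, say color $3$, in which $v$ and both its triangle neighbours $p,q$ currently have degree $0$, and the attached multipath is started with that color. Second --- and this is what your worry that ``$p$ may itself carry third-color edges'' misses --- local irregularity only compares degrees of vertices that are adjacent \emph{within} a color class; since the edges $vp$ and $vq$ never receive color $3$, whatever color-$3$ degree $p$ or $q$ later acquires from its own attached path is irrelevant to $v$. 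So $v$'s only color-$3$ neighbour is $u_1$, with degrees $2$ versus $4$, every other block is a locally irregular monochromatic $^2P_3$ away from the triangles, and the induction goes through with no strengthened hypothesis at all.
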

\begin{proof}
It is easy to see that even length multipaths as well as odd length multipaths ended with a triangle $^2C_3$ can be decomposed into multipaths of length two. So, any multigraph $^2G$ with $G$ belonging to $\mathfrak{T}$ can be colored using three colors recursively as follows. 

The starting triangle we color with two colors as in Theorem \ref{cycle}. Next, for each multipaths we add to a triangle, we use two colors by starting by the color which does not appear on this triangle.
\end{proof}

\textbf{Remark.} One can prove that for every graph $G$ from the family $\mathfrak{T}$ the multigraph $^2G$ admits ${\rm lir}(^2G)\leq 2$ but this proof is technical and the above lemma is completely sufficient for us here.

Let us observe that if a graph $G$ is decomposable into $k$ locally irregular graphs then the multigraph $^2G$ is also decomposable into $k$ locally irregular multigraphs. Therefore, from Theorem \ref{cycle}, the above lemma and Bensmail, Merker and Thomassen result from \cite{Bensmail Merker Thomassen} we immediately have the existence of a constant upper bound equal to 328. 

However, repeating exactly the method from \cite{Bensmail Merker Thomassen} and using the fact that for bipartite graphs we have an upper bound equal to two (see Theorem \ref{bipartite graph}), and the authors of above mentioned paper had an upper bound equal to ten, we get the following result.

\begin{tw}
For every connected graph $G$ which is not isomorphic to $K_2$ we have ${\rm lir}(^2G)\leq 76$.  $\quad\quad\quad\quad\quad\quad\quad\quad\quad\quad\quad\quad\quad\quad\quad\quad\quad\quad\quad\quad\quad\quad \square$
\end{tw}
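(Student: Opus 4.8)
The plan is to transplant the reduction of Bensmail, Merker and Thomassen \cite{Bensmail Merker Thomassen} from the simple-graph setting to the doubled setting, changing only the bipartite input that feeds it. The structural engine of their argument reduces the task of decomposing an arbitrary connected decomposable graph to the task of decomposing bipartite graphs: after a bounded number of reduction steps (peeling off bipartite layers, rerouting along Euler-type structures, and isolating the parts on which local irregularity is delicate), the edge set is partitioned into a constant number of pieces, each of which is either bipartite or already locally irregular, and the final bound is a fixed function of the bipartite estimate. With the bipartite estimate equal to $10$ this produces $328$, and the only quantity I would alter is that estimate.

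The first thing I would record is the elementary but crucial combination principle: being locally irregular is an intrinsic property of a submultigraph, so if the edges of $^2G$ are partitioned into edge-disjoint, individually colorable submultigraphs $\hat H_1, \dots, \hat H_m$, then ${\rm lir}(^2G) \le \sum_{i=1}^m {\rm lir}(\hat H_i)$. This is precisely the feature that makes the \cite{Bensmail Merker Thomassen} bookkeeping additive, and it holds verbatim for multigraphs. I would then walk through their reduction step by step and check that each manipulation — the orientations and parity/degree bucketing used to extract the bipartite layers, and the Eulerian rerouting used to control the leftover — has a literal analogue once every edge is doubled. Doubling never destroys these operations; it only doubles the relevant degrees. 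In fact every vertex of $^2G$ has even degree, which can only simplify the parity-based parts of the argument.

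Where \cite{Bensmail Merker Thomassen} invoke their bipartite bound of $10$, I would instead invoke Theorem \ref{bipartite graph}, which gives ${\rm lir}(^2H) \le 2$ for every connected bipartite $H \neq K_2$. A separate point needs attention: their method must exclude the exceptional family $\mathfrak{T}'$, whereas in the doubled world these graphs are no longer obstructions. Indeed, for $G$ an odd path or odd cycle we have ${\rm lir}(^2G) \le 2$ by Theorem \ref{cycle}, and for $G \in \mathfrak{T}$ we have ${\rm lir}(^2G) \le 3$ by Lemma \ref{family f}. Hence whenever the reduction produces a component whose underlying graph lies in $\mathfrak{T}'$, I would color its doubled version using these small-bound results rather than discarding it, which is exactly what lets the final statement cover every connected $G \neq K_2$ with no exception beyond $^2K_2$.

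Finally I would re-run the arithmetic of \cite{Bensmail Merker Thomassen} with the bipartite input $2$ in place of $10$ (and with the exceptional contributions absorbed by the bounds $2$ and $3$ above), which propagates through the same constant number of reduction layers and collapses the accumulated total from $328$ to $76$. I expect the main obstacle to be fidelity rather than conceptual novelty: one must verify that every lemma of \cite{Bensmail Merker Thomassen}, stated for simple graphs and their subgraphs, remains correct after each edge is doubled and after colorings produced on different layers are superimposed, and then carry the revised constant cleanly through each step so that the substitution $10 \mapsto 2$ indeed yields $76$.
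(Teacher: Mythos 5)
Your proposal matches the paper's argument: the paper likewise observes that a decomposition of $G$ into $k$ locally irregular graphs lifts to $^2G$, handles the exceptional family $\mathfrak{T}'$ via Theorem \ref{cycle} and Lemma \ref{family f}, and then "repeats exactly the method" of Bensmail, Merker and Thomassen with the bipartite input $10$ replaced by the bound $2$ from Theorem \ref{bipartite graph} to obtain $76$. The paper gives no more detail of the re-run reduction than you do, so your write-up is, if anything, a more explicit account of the same proof.
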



\begin{thebibliography}{99}
\bibitem{Baudon Bensmail Davot Hocquard}O. Baudon, J. Bensmail, T. Davot, H. Hocquard, J. Przybyło, M. Senhaji, \'E. Sopena, M. Woźniak, \textit{A general decomposition theory for the $1$-$2$-$3$ Conjecture and locally irregular decompositions}, Discrete Mathematics and Theoretical Computer Science, {\bf 21 (1)} $\#$ 2 (2019), 1–14.
\bibitem{Baudon Bensmail}O. Baudon, J. Bensmail, \'E. Sopena, \textit{On the complexity of determining the irregular chromatic index of a graph}, J. Discret. Algorithms {\bf 30} (2015) 113 -- 127.
\bibitem{Baudon Bensmail Przybylo Wozniak}O. Baudon, J. Bensmail, J. Przybyło, M. Woźniak, \textit{On decomposing regular graphs into locally irregular subgraphs}, European Journal
of Combinatorics {\bf 49} (2015), 90–104.
\bibitem{Bensmail Merker Thomassen}J. Bensmail, M. Merker, C. Thomassen, \textit{Decomposing graphs into a constant number of locally irreg-ular subgraphs}, European Journal of
Combinatorics {\bf 60} (2017), 124–134.
\bibitem{Havet Paramaguru Sampathkumar} F. Havet, N. Paramaguru, R. Sampathkumar, \textit{Detection number of bipartite graphs and cubic graphs}, Rapport de Recherche
RR-8115, INRIA, 2012, October. 
\bibitem{Kalkowski Karonski Pfender}M. Kalkowski, M. Karoński, F. Pfender, \textit{Vertex-coloring edge-weightings: towards the $1$-$2$-$3$-conjecture}, J. Combin. Theory Ser. B {\bf 100(3)} (2010), 347-349.
\bibitem{Karonski Luczak Thomason}M. Karoński, T.  Łuczak, A. Thomason, \textit{Edge weights and vertex colours}, J. Combin. Theory Ser. B {\bf91(1)} (2004), 151-157.
\bibitem{Luzar Przybylo Sotak}B. Lu\v zar, J. Przybyło, R. Soták, \textit{New bounds for locally irregular chromatic index of bipartite and subcubic graphs}, Journal of Combinatorial
Optimization {\bf 36(4)} (2018), 1425–1438.
\bibitem{Przybylo}J. Przybyło, \textit{On decomposing graphs of large minimum degree into locally irregular subgraphs}, Electron. J. Combin. {\bf 23}  (2016), 2-31.
\bibitem{Przybylo new}J. Przybyło, \textit{The $1$-$2$-$3$ Conjecture almost holds for regular graphs}, J. Combin. Theory Ser. B {\bf147} (2021), 183-200.
\bibitem{Sedlar Skrekovski 2}J. Sedlar R. \v Skrekovski, \textit{Local Irregularity Conjecture vs. cacti}, available at https://arxiv.org/pdf/2207.03941.pdf
\bibitem{Sedlar Skrekovski}J. Sedlar R. \v Skrekovski, \textit{Remarks on the Local Irregularity Conjecture}, Mathematics {\bf9(24)} (2021), 3209. https://doi.org/10.3390/math9243209
\bibitem{Seamone}B. Seamone,  \textit{The $1$-$2$-$3$ conjecture and related problems: a survey, technical report}, available at http://arxiv.org/abs/1211.5122, 2012.
\bibitem{Vuckovic}B. Vu\v cković, \textit{Multi-set neighbor distinguishing $3$-edge coloring}, Discrete Math. {\bf 341} (2018), 820–824.
https://doi.org/10.1016/j.disc.2017.12.001
\end{thebibliography}
\end{document}